\DeclareMathOperator{\coker}{Coker}
\newtheorem{thm}{Theorem}[section]
\newtheorem{lem}[thm]{Lemma}
\newtheorem{cor}[thm]{Corollary}
\newtheorem{prop}[thm]{Proposition}
\theoremstyle{definition}
\newtheorem{defn}[thm]{Definition}
\newtheorem{que}[thm]{Question}
\theoremstyle{remark}
\newtheorem{rmk}[thm]{Remark}
\numberwithin{equation}{section}
\numberwithin{equation}{section} \makeatletter
\begin{document}

\title{A lower bound for the double slice genus}

%    Information for first author
\author{Wenzhao Chen}
%    Address of record for the research reported here
\address{Max Planck Institute of Mathematics, Vivatsgasse 7, 53111 Bonn, Germany}
%    Current address
\email{chenwenz@msu.edu}
%    \thanks will become a 1st page footnote.
\thanks{}

%\dedicatory{to my advisor}

%\keywords{Differential geometry, algebraic geometry}

\begin{abstract}
In this paper, we develop a lower bound for the double slice genus of a knot using Casson-Gordon invariants. As an application, we show that the double slice genus can be arbitrarily larger than twice the slice genus. As an analogue to the double slice genus, we also define the superslice genus of a knot, and give both an upper bound and a lower bound in the topological category.
\end{abstract}
\subjclass[2010]{57M25, 57M27}
\maketitle

\section{Introduction}
A surface smoothly embedded in the 4-sphere is \textit{unknotted} if it bounds a handlebody. Regarding the 3-sphere $S^3$ as the equator of the 4-sphere, a knot $K\subset S^3$ is \textit{smoothly doubly slice} if it is the intersection of an unknotted 2-sphere with the equator $S^3$. Obviously not every knot is doubly slice, for there exists knots which are not even \textit{slice}, i.e. bounding properly embedded disks in the 4-ball. Moreover, not every slice knot is doubly slice. In fact, about six decades ago Fox posed a challenging question: determine which slice knots are doubly slice (cf.\ Problem 39 of \cite{Fox62}). Since then this question has been the center of the study of double sliceness, and many obstructions to double sliceness were found (e.g.\ \cite{Ste04, GL83, Kim06, LM15, Mei15, Ors17, Sum71}). Recently Livingston and Meier introduced a notion called the double slice genus of a knot, which sets this topic in larger context \cite{LM15}. We recall the definition below. 
\begin{defn}
Given a knot $K\subset S^3$, its double slice genus is defined as 
\begin{displaymath}
g_{ds}(K)=\min_{\mathcal{S}} \{ g(\mathcal{S})\\  |\ \mathcal{S}\ \text{is an unknotted surface in}\ S^4,\  \mathcal{S}\cap S^3 = K\},
\end{displaymath}
where we view $S^3$ as the equator of $S^4$. 
\end{defn}

Note $g_{ds}(K)$ is defined, for a surface $\mathcal{S}$ satisfying the above requirements exists for every $K$. To see this, let $F$ be a surface obtained by pushing the interior of some Seifert surface of $K$ into the 4-ball. Then the double of $F$ clearly bounds a 3-manifold homeomorphic to $F\times I$. Furthermore, this observation also implies the double slice genus is bounded above by twice the Seifert genus. On the other hand, it is straightforward to see $g_{ds}(K)$ is bounded below by twice the slice genus of $K$. In summary, we have
\begin{displaymath}
2g_4(K) \leq g_{ds}(K) \leq 2g_3(K).
 \end{displaymath}

Along the lines of Fox's question to tell sliceness and double sliceness apart, a natural question in this context is: can the double slice genus be arbitrarily larger than twice the slice genus? Answering this question requires a lower bound for the double slice genus. Note while many knot invariants give lower bounds for $g_4$, there previously are no algebraic invariants that improve on the lower bound $2g_4$ for $g_{ds}$. By using Casson-Gordon invariants of the two-fold branched cover in conjunction with another algebraic invariant that we define, we develop the first such lower bound in this paper. As a primary application, we prove
\begin{thm}\label{thm1}
There exist ribbon knots $K_n$, $n\in \mathbb{N}$, such that $$\lim_{n\rightarrow \infty} g_{ds}(K_n)=\infty.$$
\end{thm}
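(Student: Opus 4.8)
The plan is to exhibit an explicit family of ribbon knots and to force the new lower bound of Definition \ref{definition for theta} to diverge. Since each $K_n$ will be ribbon, it is in particular slice, so $g_4(K_n)=0$ and the elementary inequality $2g_4(K_n)\le g_{ds}(K_n)$ is vacuous; all of the content must therefore come from the Casson--Gordon lower bound $\theta$. In particular, proving $g_{ds}(K_n)\to\infty$ simultaneously shows that $g_{ds}-2g_4$ can be made arbitrarily large, which is the application advertised in the abstract.

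First I would fix a ribbon knot $J$ whose two-fold branched cover $\Sigma(J)$ has small and completely understood homology, for instance a slice two-bridge knot, so that $\Sigma(J)$ is a lens space and its linking form together with the relevant Casson--Gordon signatures can be computed by hand. I would then set $K_n=\#^n J$ (or, if cancellation forces it, a connected sum of suitably varied ribbon knots $J_1,\dots,J_n$). A connected sum of ribbon knots is ribbon, so each $K_n$ is ribbon as required. Moreover $\Sigma(K_n)=\#^n\Sigma(J)$, so that $H_1(\Sigma(K_n))$ is the $n$-fold direct sum of $H_1(\Sigma(J))$, its linking form is the orthogonal sum, and the Casson--Gordon invariants are additive over the summands.

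Next I would unwind the lower bound. From a genus-$g$ unknotted surface $\mathcal{S}$ with $\mathcal{S}\cap S^3=K_n$, passing to two-fold branched covers produces a closed four-manifold split by $\Sigma(K_n)$ into two pieces coming from the two sides; unknottedness constrains these pieces, and they furnish subgroups of $H_1(\Sigma(K_n))$ whose size is controlled by $g$ and onto which the relevant characters extend. The defining property of $\theta$ is then that a character vanishing on such a subgroup has Casson--Gordon invariant bounded in terms of $g$. The heart of the argument is to show that for $K_n$ no admissible pair of subgroups can make all of the associated Casson--Gordon invariants small: because the invariants of the $n$ summands are independent, any such subgroup must, by a linear-algebra count over the linking form, leave on the order of $n$ summands contributing a definite nonzero signature, whence $\theta(K_n)\ge c\,n$ for some $c>0$.

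The main obstacle I anticipate is exactly this non-cancellation. Unlike the slice case, where a single metabolizer suffices, the double slice bound involves subgroups arising from both sides of the surface, and the connected-sum structure creates many ``diagonal'' subgroups that could in principle pair up the summands and cancel their Casson--Gordon contributions. Ruling this out requires choosing the summands so that their Casson--Gordon data are genuinely independent (for example, supported on distinct primes or realizing incommensurable signature jumps), and then combining the additivity of Casson--Gordon invariants with a pigeonhole argument over the possible subgroups. Once this independence is secured, the chain $g_{ds}(K_n)\ge\theta(K_n)\ge c\,n\to\infty$ completes the proof.
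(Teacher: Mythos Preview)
Your overall strategy---take connected sums of a fixed ribbon two-bridge knot and use the bound $\theta$---matches the paper exactly (the paper uses the two-bridge knot $S(9,4)$, so $\Sigma(J)=L(9,4)$ and $H_1(\Sigma(K_n))=(\mathbb{Z}_9)^n$). But your sketch contains a real gap in how the bound is extracted. Recall $\theta(K)=\min_{(G_1,G_2)}\max\{\theta_1,\theta_3\}$, where $\theta_1$ is the purely homological piece and $\theta_3$ the Casson--Gordon piece; the paper emphasizes that neither $\theta_i$ alone is ever nontrivial. Your plan is to show that for every admissible pair the Casson--Gordon contribution stays large, and this is simply false: the trivial pair $(0,0)$ is extendable for any $G$, only the zero character factors through it, and since $\sigma(K_n)=0$ one gets $\theta_3(K_n,0,0)=0$. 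More generally, pairs $(G_1,G_2)$ in which $G_1\oplus G_2$ has few $\mathbb{Z}_9$-summands admit few useful characters and make $\theta_3$ small. The paper's actual argument is a dichotomy on the number $l$ of $\mathbb{Z}_9$-summands in $G_1\oplus G_2$: if $l$ is small compared to $n$, then $\theta_1$ is large, because the presentation in Definition~\ref{definition1}(a)(ii) must manufacture roughly $n-l$ extra order-$9$ generators to recover $H_1(\Sigma)$; if $l$ is large, then one of the $G_i$ has many $\mathbb{Z}_9$-summands and an explicit Casson--Gordon estimate (Proposition~\ref{technicalproposition}) makes $\theta_3$ large. The interplay of $\theta_1$ and $\theta_3$ is the whole point, and it is absent from your outline.

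Two smaller remarks. First, your proposed fix of using summands on distinct primes is aimed at a different worry (cancellation among summands under a single character) and would not address the actual obstacle, which is extendable pairs with small quotient groups; the paper succeeds with a \emph{single} repeated summand. Second, the admissible data consist of \emph{quotients} of $H_1(\Sigma)$ (with presentation constraints), not subgroups; describing them as subgroups on which characters vanish misreads Definition~\ref{definition1} and makes the $\theta_1$ mechanism invisible.
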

Closely related to the double sliceness is a notion called supersliceness. Recall that a knot $K$ is called \textit{superslice} if there is a slice disk $D$ whose double along $K$ produces an unknotted 2-sphere in $S^4$. As an analogue to the double slice genus, we define the superslice genus of a knot.
\begin{defn}
Given a knot $K\subset S^3$, its superslice genus is defined as 
\begin{displaymath}
\begin{aligned}
g^{s}(K)=\min_{F} \{ g(F)\,  |  \,F\,\text{is properly smoothly embedded in}\,D^4, \partial F= K\,\\
\text{and the double of\,}F\,\text{bounds a handlebody in\,}S^4 \}.
\end{aligned}
\end{displaymath}
\end{defn}

It is easy to see $g_{ds}(K)\leq 2g^s(K)$, and hence the lower bounds for the double slice genus hold for the superslice genus as well. However, greater rigidity encoded in the definition of the superslice genus compared to that of the double slice genus allows us to obtain a much more accessible bound. 

\begin{thm}\label{superslice lower bound}
Let $K$ be a knot in $S^3$ and $\Sigma$ be the two-fold branched cover of $S^3$ along $K$. Then the minimum number of generators of $H_1(\Sigma; \mathbb{Z})$ is a lower bound for $2 g^s(K)$.
\end{thm}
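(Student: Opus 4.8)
The plan is to realize $\Sigma$ as the boundary of the double branched cover of the $4$-ball over a superslice surface and to extract the bound from the unknottedness of the doubled surface. Fix a surface $F\subset D^4$ of genus $g=g^s(K)$ whose double $\mathcal{S}=F\cup_K\overline{F}\subset S^4$ bounds a handlebody, and set $V=\Sigma_2(D^4,F)$ and $Y=\Sigma_2(S^4,\mathcal{S})$, the two-fold covers of $D^4$ and $S^4$ branched over $F$ and $\mathcal{S}$ respectively. Then $\partial V=\Sigma$, and since $\mathcal{S}$ is symmetric under the reflection interchanging the two hemispheres $D^4_\pm$, the cover $Y$ is the double $V\cup_{\Sigma}\overline{V}$. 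The whole argument is homological once one geometric fact is in place, and I expect that fact to be the only real obstacle.

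The key step is to use that $\mathcal{S}$ is unknotted, which gives $\pi_1(S^4\setminus\mathcal{S})\cong\mathbb{Z}$ and hence $H_1(Y;\mathbb{Z})=0$ (in fact $Y\cong\#_{2g}(S^2\times S^2)$, but only the vanishing of $H_1$ is needed). This is the single place where the handlebody hypothesis is used, and it is precisely what makes the bound stronger than the elementary $2g_4$ estimate: for a general slice surface $\mathcal{S}$ would be knotted and $H_1(Y)$ need not vanish. Feeding $H_1(Y)=0$ into the Mayer--Vietoris sequence of $Y=V\cup_{\Sigma}\overline{V}$ shows that $H_1(\Sigma)\to H_1(V)\oplus H_1(V)$ is onto; since both summands receive the same inclusion-induced map, the image is diagonal, and surjectivity forces $H_1(V;\mathbb{Z})=0$.

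With $H_1(V)=0$ the rest is bookkeeping. The long exact sequence of the pair $(V,\Sigma)$ yields a surjection $\partial\colon H_2(V,\Sigma)\twoheadrightarrow H_1(\Sigma)$, and Lefschetz duality together with universal coefficients identifies $H_2(V,\Sigma)\cong H^2(V)\cong\Hom(H_2(V),\mathbb{Z})$, a free abelian group of rank $b_2(V)$; hence the minimal number of generators of $H_1(\Sigma)$ is at most $b_2(V)$. I would then pin down $b_2(V)$ by an Euler-characteristic count: the branched-cover formula gives $\chi(V)=2\chi(D^4)-\chi(F)=1+2g$, while $b_0(V)=1$, $b_1(V)=0$, $b_4(V)=0$, and $b_3(V)=0$, the last because $H_3(V;\mathbb{Q})\cong H^1(V,\Sigma;\mathbb{Q})=0$ (using $H^1(V;\mathbb{Q})=0$ and the isomorphism $H^0(V)\to H^0(\Sigma)$). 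Thus $b_2(V)=2g$, so the minimal number of generators of $H_1(\Sigma)$ is at most $2g=2g^s(K)$; taking the minimum over all superslice surfaces gives the theorem. The one step I would guard carefully is the identification $H_1(Y)=0$ from unknottedness, since the entire strength of the bound is concentrated there.
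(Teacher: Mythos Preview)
Your proof is correct and follows the same overall architecture as the paper's: establish $H_1(V)=0$ for the branched double cover $V=\Sigma_2(D^4,F)$, then read off the generator bound from the long exact sequence of $(V,\Sigma)$ together with Lefschetz duality and universal coefficients. The difference is in how you reach $H_1(V)=0$. The paper first proves the stronger statement $\pi_1(D^4\setminus F)\cong\mathbb{Z}$ via a Van Kampen pushout (obtaining a surjection $\mathbb{Z}\cong\pi_1(S^4\setminus\mathcal{S})\twoheadrightarrow\pi_1(D^4\setminus F)$ from the universal property), and then deduces $H_1(V)=0$ by looking at the unbranched cover. Your route stays purely homological: you go straight to $H_1(Y)=0$ for $Y=\Sigma_2(S^4,\mathcal{S})$ and apply Mayer--Vietoris to the double $Y=V\cup_\Sigma\overline{V}$, using that both boundary inclusions induce the same map so the image lands in the diagonal, which forces $H_1(V)=0$. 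Your argument is a bit slicker for this particular theorem, while the paper's intermediate lemma $\pi_1(D^4\setminus F)\cong\mathbb{Z}$ is of independent interest and dovetails with the fundamental-group techniques (and the Hillman--Kawauchi unknotting criterion) used later in the paper for the upper bound.
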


In fact, the idea contained in the proof of Theorem \ref{superslice lower bound} may serve as a prototype for lower bounds for the double slice genus. Compare Subsection 2.1 and Subsection 3.1. 

In addition to lower bounds, it is also natural to ask if one can give upper bounds for $g^s$ or $g_{ds}$. In this paper, we pursue this direction in the topological category, i.e. the surfaces used in Definition 1.1 and Definition 1.3 are allowed to be topologically embedded and locally flat, and denote the corresponding quantities by $g^s_{top}(K)$ and $g_{ds}^{top}(K)$. We remark that the topological category and the smooth category are different \cite{Mei15, Rub16}, and the lower bounds constructed in this paper also hold in the topological category. 

We give upper bounds in terms of the Alexander polynomial. Freedman proved knots with trivial Alexander polynomial are topologically slice \cite{MR804721, MR1201584}. Recently, Feller generalized this theorem: the degree of the Alexander polynomial is an upper bound for twice the topological slice genus \cite{Fel16}. Here the degree of the Alexander polynomial is the breadth of the polynomial. In the context of superslice genus, two results of Freedman imply that knots with trivial Alexander polynomial are exactly the topologically superslice knots; see \cite{LM15} and \cite{Mei15}, or see the discussion in Subsection 3.2. So it is natural to wonder if one can bound the topological superslice genus by the degree of the Alexander polynomial. Indeed, we have the following result. 

\begin{thm}\label{superslcie upper bound}
The degree of the Alexander polynomial of a knot is an upper bound for twice its topological superslice genus.
\end{thm}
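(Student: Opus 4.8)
The plan is to reduce to the case of a knot with trivial Alexander polynomial, for which Livingston and Meier's topological supersliceness \cite{LM15} is available, and to carry the reduction out by a cobordism that can be doubled into a handlebody. Write $n=\deg\Delta_K(t)$. Following the method underlying Feller's theorem \cite{Fel16}, I would first produce a smooth genus-$n$ cobordism $C\subset S^3\times[0,1]$ from $K$ (at level $0$) to a knot $J$ (at level $1$) with $\Delta_J(t)=1$, realized as an explicit movie of band (saddle) moves dictated by a reduction of the Seifert form of $K$. Since $\Delta_J=1$, by \cite{LM15} the knot $J$ is topologically superslice: it bounds a locally flat disk $D_J\subset D^4$ whose double along $J$ is an unknotted $2$-sphere bounding a $3$-ball $B\subset S^4$; equivalently $D^4\setminus\nu(D_J)\cong S^1\times D^3$.

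Next I would assemble the candidate surface. Pushing $C$ into a collar $S^3\times[0,1]$ of $\partial D^4$ and capping $J$ off with $D_J$ yields a locally flat, properly embedded surface $F=C\cup D_J\subset D^4$ with $\partial F=K$ and $g(F)=n$. Its double $DF=F\cup_K\overline F\subset S^4$ is then a closed surface of genus $2n$, and it decomposes as $DF=D_J\cup_J DC\cup_J\overline{D_J}$, where $DC=C\cup_K\overline C$ is the doubled cobordism from $J$ to $J$. The whole statement thus reduces to showing that this $DF$ bounds a handlebody in $S^4$, which by the definition of $g^s_{top}$ gives $g^s_{top}(K)\le n=\deg\Delta_K(t)$, as claimed in Theorem \ref{superslcie upper bound}.

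To prove that $DF$ bounds a handlebody I would track the band moves of $C$ through the doubling, starting from the $3$-ball $B$ bounded by the unknotted sphere $D_J\cup_J\overline{D_J}$. Each elementary saddle of $C$ doubles to a symmetric pair of saddles in $DC$, and geometrically this modifies $D_J\cup_J\overline{D_J}$ into $DF$ by a sequence of embedded tubes, i.e.\ one-handles on the surface. The key claim is that every such tube can be filled by attaching a corresponding $1$-handle to the bounding $3$-manifold without leaving the class of handlebodies; inducting on the number of bands then upgrades $B$ to a genus-$2n$ handlebody $H$ with $\partial H=DF$. Because the construction is symmetric under the doubling involution (the bands on $\overline C$ mirror those on $C$), these handle attachments can be carried out equivariantly, so that the resulting handlebody is genuinely embedded in $S^4$.

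The main obstacle is precisely this last step: controlling the topology of the exterior through the doubling. One must ensure that $C$ (hence $DC$) is standardly enough embedded that its complement in $S^3\times[0,1]$ is a product-with-handles, so that the tubes appearing in $DF$ are unknotted relative to $B$ and the $1$-handles can indeed be attached; here the fact that $C$ is an explicit band-move movie, rather than an abstract cobordism, is essential. A second, more delicate point is reconciling the smooth cobordism $C$ with the merely locally flat disk $D_J$ supplied by Freedman's theory \cite{Fre82} through \cite{LM15}: the handle attachments and the resulting handlebody $H$ must all be realized in the topological locally flat category, and one must verify that the gluing $D_J\cup_J DC\cup_J\overline{D_J}$ and the corresponding assembly of $B$ with the $1$-handles respect this regularity. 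Once this equivariant, locally flat handle calculus is in place, the bound $g^s_{top}(K)\le\deg\Delta_K(t)$ follows.
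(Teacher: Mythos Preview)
Your construction of the surface $F$ is essentially the one the paper uses (the paper phrases Feller's reduction as a separating curve $L$ on a Seifert surface rather than a band cobordism to a knot $J$, but these amount to the same thing). The genuine gap is exactly where you yourself locate it: the ``key claim'' that the tubes coming from the doubled bands can be filled by $1$-handles so as to extend the $3$-ball $B$ to an embedded handlebody. This is not a formality. A tube attached to an unknotted $2$-sphere in $S^4$ may well be knotted, and there is no reason an abstract $1$-handle attached along it yields an \emph{embedded} handlebody; verifying that your particular tubes are standard would require controlling how the band cores sit in the complement of $B$, and the symmetry under the doubling involution does not buy you this. Worse, $D_J$ and hence $B$ are produced by Freedman's theorem and are only locally flat, so you have no smooth handle calculus to fall back on near $B$. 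As written, the argument stops precisely at the point that carries all the content.

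The paper avoids this difficulty entirely by appealing to the unknotting theorem of Hillman and Kawauchi \cite{HK95}: a locally flat closed surface $\mathcal{S}\subset S^4$ bounds a handlebody if and only if $\pi_1(S^4\setminus\mathcal{S})\cong\mathbb{Z}$. So one only needs $\pi_1(D^4\setminus F)\cong\mathbb{Z}$, after which Van Kampen gives $\pi_1(S^4\setminus DF)\cong\mathbb{Z}$ and Hillman--Kawauchi finishes. For the paper's $F$ this is immediate: a pushed-in Seifert surface has infinite cyclic complement, and replacing the subsurface $C$ by Freedman's disk (whose complement is also infinite cyclic by Theorem~\ref{freedman disk theorem}) preserves this, again by Van Kampen. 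In your language you would need the analogous statement for the band cobordism, which holds for Feller's specific bands since they lie on a Seifert surface but is not automatic for arbitrary band moves. Replace your explicit handlebody construction with this $\pi_1$ computation and the appeal to Hillman--Kawauchi, and the proof goes through.
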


This theorem has the following immediate corollary.
\begin{cor}\label{superslice genus for knots with alex of deg 1}
If the degree of the Alexander polynomial of a knot $K$ is $2$, then $g^s_{top}(K)=1$.
\end{cor}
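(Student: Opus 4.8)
The upper bound is immediate: Theorem \ref{superslcie upper bound} gives $g^s_{top}(K)\le \deg\Delta_K(t)=1$, so the entire content of the corollary is the matching lower bound $g^s_{top}(K)\ge 1$. Equivalently, I must rule out the possibility that $K$ is topologically superslice, i.e.\ that $g^s_{top}(K)=0$. The plan is to argue by contradiction using the homology of the two-fold branched cover $\Sigma$ of $S^3$ along $K$.

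So suppose $g^s_{top}(K)=0$. Invoking the topological analogue of Theorem \ref{superslice lower bound} (the remark preceding Theorem \ref{superslcie upper bound} asserts that the lower bounds of this paper remain valid in the topological category), the minimum number of generators of $H_1(\Sigma)$ is at most $2g^s_{top}(K)=0$, and hence $H_1(\Sigma)=0$. Since $\Sigma$ is a rational homology sphere, $H_1(\Sigma)$ is finite of order $\det(K)=|\Delta_K(-1)|$; the vanishing just obtained would therefore force $|\Delta_K(-1)|=1$.

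It then remains to observe that a degree-one Alexander polynomial can never have determinant equal to $1$. Writing the normalized symmetric form $\Delta_K(t)=a(t+t^{-1})+b$ with $a,b\in\mathbb{Z}$ and $a\neq 0$ (degree exactly one), the standard identity $\Delta_K(1)=\pm 1$ gives $2a+b=\pm 1$, so that $\Delta_K(-1)=b-2a=\pm 1-4a$. Because $a$ is a nonzero integer we have $\pm 1-4a\equiv \pm 1\pmod 4$ and $|\pm 1-4a|\ge 3$, whence $|\Delta_K(-1)|\neq 1$. This contradicts the previous step, so $g^s_{top}(K)\ge 1$, and combining with the upper bound yields $g^s_{top}(K)=1$.

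I expect the only genuinely substantive point to be this final arithmetic observation, together with the correct bookkeeping of the determinant identity $|H_1(\Sigma)|=|\Delta_K(-1)|$; everything else is a direct assembly of Theorems \ref{superslcie upper bound} and \ref{superslice lower bound}. The one place demanding care is that the lower bound must be applied in the topological rather than the smooth category, which the paper has already declared legitimate, so no separate argument is needed there.
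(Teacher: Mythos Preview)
Your proof is correct, but it takes a different route from the paper's. The paper obtains the lower bound by directly invoking the known characterization that $g^s_{top}(K)=0$ if and only if $\Delta_K(t)=1$ (citing \cite{LM15} and \cite{GS75}); since $\deg\Delta_K(t)=1$ rules out $\Delta_K(t)=1$, the bound $g^s_{top}(K)\ge 1$ follows in one line. You instead argue internally: apply Theorem~\ref{superslice lower bound} (in the topological category, as the paper permits) to deduce that $g^s_{top}(K)=0$ would force $H_1(\Sigma)=0$, hence $|\Delta_K(-1)|=1$, and then rule this out by the arithmetic observation that a symmetric degree-one polynomial with $\Delta_K(1)=\pm 1$ satisfies $|\Delta_K(-1)|\ge 3$. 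Your argument is more self-contained, relying only on machinery developed in the paper plus the classical identity $|H_1(\Sigma)|=|\Delta_K(-1)|$, whereas the paper's version is shorter but outsources the key implication to the literature. Both are valid; yours in effect reproves a special case of the cited characterization using the paper's own lower bound.
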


\begin{rmk}
Similar inequalities as in Theorem \ref{superslice lower bound} and Theorem \ref{superslcie upper bound} also appeared in the context of \textit{$\mathbb{Z}$-slice genus} of a knot, i.e.\ minimal genus of surfaces in the 4-ball whose boundary is the given knot and whose complement has fundamental group isomorphic to $\mathbb{Z}$ (cf.\ Theorem 1 and Proposition 12 of \cite{FL18}). In fact, the proof of Thoerem \ref{superslcie upper bound} together with Theorem 1.1 of \cite{FL19} imply the topological superslice genus is equal to the $\mathbb{Z}$-slice genus. This was pointed out to the author by Peter Feller and Lukas Lewark. We refer the interested readers to their papers \cite{FL18, FL19} for a more systematic study of the $\mathbb{Z}$-slice genus.
\end{rmk}
The rest of the paper is organized as follows: the lower bounds for the double slice genus are constructed in Subsection 2.1--2.3. Theorem \ref{thm1} is proved in Subsection 2.4, modulo a technical lemma which is proved in the appendix. Theorem \ref{superslice lower bound} is proved in Subsection 3.1. Theorem \ref{superslcie upper bound} and Corollary \ref{superslice genus for knots with alex of deg 1} are proved in Subsection 3.2. 

\noindent\textbf{Acknowledgment:} I thank Peter Feller, Matt Hedden, Kristen Hendricks, Effie Kalfagianni and Chuck Livingston for their help and interest. Especially, Matt Hedden sacrificed his personal time discussing superslice genus with me during his very busy days, and Kristen Hendricks kindly provided extremely detailed feedback after reading the first draft. I am also indebted to the referee for many helpful suggestions. The author is grateful to the Max Planck Institute of Mathematics in Bonn for its hospitality and financial support. 

\section{Construction of the lower bound}
In this section, we construct a lower bound for the double slice genus and use it prove Theorem \ref{thm1}. This bound comes from studying $\Sigma(K)$, the two-fold branched cover of $S^3$ along a knot $K$ (we expect similar bounds can be defined using higher order branched covers): we first derive a lower bound in terms of the singular homology of $\Sigma(K)$ in Subsection 2.1, and then study the Casson-Gordon invariants of $\Sigma(K)$ to give other lower bounds in Subsection 2.2, finally we combine the bounds in Subsection 2.1-2.2 to give the desired lower bound in Subsection 2.3. In Subsection 2.4, we prove Theorem \ref{thm1}. 

\subsection{Double slice genus and the singular homology of $\Sigma(K)$}
Regard $S^3$ as the equator of $S^4=D^4_1\cup_{S^3} D^4_2$, where the spaces $D_i^4$'s are two copies of the 4-ball. Let $K$ be a knot in $S^3$ and let $F_{i}$ be properly embedded surfaces in $D^4_{i}$ such that $\partial F_{i}=K$, $i=1,2$, and $F=F_1\cup F_2$ bounds a handlebody in $S^4$. Let $g_i$ denote the genus of $F_i$, let $g=g_1+g_2$, and let $W_i$ be the two-fold branched cover of $D_i^4$ along $F_i$, $i=1,2$. Then $\partial W_1=-\partial W_2=\Sigma(K)$, and $W=W_1\cup_{\Sigma(K)} W_2$ is diffeomorphic to $\#_{g}S^2\times S^2$, the connected sum of $g$ copies of $S^2\times S^2$. In the set up, the genus of $F$ is captured by the homology groups of the 4-manifolds. Indeed, we have the fact that $b_2(W_i)=2g_i$ (see Proposition \ref{extendability} (i) for a proof). 

We use the homology groups of $\Sigma(K)$ to give lower bounds for $b_2(W_i)$ by examining various long exact sequences relating these spaces. For convenience, we write $\Sigma$ for $\Sigma(K)$ hereafter. Throughout this paper we use integer coefficient for the singular homology groups unless otherwise specified. Note $\Sigma$ is a rational homology sphere. 
From the Mayer-Vietoris sequence for $W_1\cup_{\Sigma} W_2$, we have
\begin{equation}\label{longexactsequence1}
0\rightarrow H_2(W_1)\oplus H_2(W_2)\rightarrow H_2(W) \rightarrow H_1(\Sigma) \rightarrow H_1(W_1)\oplus H_1(W_2) \rightarrow 0.
\end{equation}

The long exact sequence for the pair $(W, \Sigma)$ gives rise to
\begin{equation}\label{longexactsequence2}
0\rightarrow H_2(W) \rightarrow H_2(W,\Sigma) \rightarrow H_1(\Sigma) \rightarrow 0.
\end{equation}

From the long exact sequence for $(W_i, \Sigma)$, $i=1,2$, we obtain
\begin{equation}\label{longexactsequence3}
0\rightarrow  H_2(W_i) \rightarrow H_2(W_i,\Sigma) \rightarrow H_1(\Sigma) \rightarrow H_1(W_i) \rightarrow 0,
\end{equation}
where surjectivity of the map $H_1(\Sigma) \rightarrow H_1(W_i)$ is derived from (\ref{longexactsequence1}).

Finally, the long exact sequence for $(W, W_i)$, $i=1,2$ shows
\begin{equation}\label{longexactsequence4}
0\rightarrow  H_2(W_i) \rightarrow H_2(W) \rightarrow H_2(W,W_i) \rightarrow H_1(W_i) \rightarrow 0.
\end{equation}

Correspondingly, we can deduce the following properties of the homology groups involved in the above exact sequences.
\begin{prop}\label{extendability}
In the notation established above, we have the following properties.
\begin{enumerate}[(i)]
\item $H_2(W_i)\cong\mathbb{Z}^{2g_i}$, $i=1,2$.\\
\item $|H_1(W_i)|^2 \big| |H_1(\Sigma)|$, $i=1,2$.\\
\item There exists $a^i_j \in H_1(W_i)\oplus\mathbb{Z}^{2g_i}$ for $i=1,2$ and $j=1,2,...,2g$ such that
		\begin{enumerate}
			\item $\frac{ H_1(W_i)\oplus \mathbb{Z}^{2g_i}}{ \langle \{a^i_j |\ j=1,...,2g \} \rangle }\cong H_1(W_{i+1})$ for $i=1,2$, where we let $W_3=W_1$.
			\item $\frac{ (H_1(W_1)\oplus\mathbb{Z}^{2g_1})\oplus (H_1(W_2)\oplus\mathbb{Z}^{2g_2} )}								{\langle \{ (a^1_j,a^2_j)|\ j=1,...,2g\} \rangle}\cong H_1(\Sigma)$.
		\end{enumerate}
		Here $\langle \{a^i_j |\ j=1,...,2g \} \rangle$  stands for the subgroup generated by the set $\{a^i_j |\ j=1,...,2g \}$.

\end{enumerate}
\end{prop}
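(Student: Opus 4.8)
The plan is to read off all four assertions from the four exact sequences (\ref{longexactsequence1})--(\ref{longexactsequence4}), after supplying two homological inputs. The first input is that each $H_1(W_i)$ is finite: since $\Sigma$ is a rational homology sphere, $H_1(\Sigma)$ is finite, and the surjection at the right end of (\ref{longexactsequence1}) exhibits $H_1(W_1)\oplus H_1(W_2)$ as a quotient of $H_1(\Sigma)$ --- this already gives assertion (iv), and shows the $H_1(W_i)$ are finite. The second input is a pair of identifications of the relative groups. By Poincar\'e--Lefschetz duality and the universal coefficient theorem, $H_2(W_i,\Sigma)\cong H^2(W_i)\cong \mathrm{Hom}(H_2(W_i),\mathbb{Z})\oplus\mathrm{Ext}(H_1(W_i),\mathbb{Z})\cong \mathbb{Z}^{2g_i}\oplus H_1(W_i)$, so $H_2(W_i,\Sigma)$ has free rank $2g_i$ and torsion subgroup $H_1(W_i)$; and, collapsing the common boundary, $W/\Sigma\simeq (W_1/\Sigma)\vee(W_2/\Sigma)$ yields $H_2(W,\Sigma)\cong H_2(W_1,\Sigma)\oplus H_2(W_2,\Sigma)$.

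Assertion (i) is then immediate: by (\ref{longexactsequence4}) (or (\ref{longexactsequence1})) the group $H_2(W_i)$ injects into $H_2(W)\cong\mathbb{Z}^{2g}$, so it is free, and its rank is $b_2(W_i)=2g_i$; hence $H_2(W_i)\cong\mathbb{Z}^{2g_i}$.

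For assertion (iii) I would fix a basis $e_1,\dots,e_{2g}$ of $H_2(W)$ and let $a^i_j$ be the image of $e_j$ under the natural map $H_2(W)\to H_2(W_i,\Sigma)\cong H_1(W_i)\oplus\mathbb{Z}^{2g_i}$. The crux is that this single map admits two compatible descriptions: as the composite of $H_2(W)\to H_2(W,\Sigma)$ from (\ref{longexactsequence2}) with the wedge projection $H_2(W,\Sigma)\to H_2(W_i,\Sigma)$, and as the composite of $H_2(W)\to H_2(W,W_{i+1})$ from (\ref{longexactsequence4}) with the excision isomorphism $H_2(W,W_{i+1})\cong H_2(W_i,\Sigma)$; a diagram chase identifying both with the map induced by collapsing $W_{i+1}$ shows they agree. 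Granting this, part (a) follows from the exactness of (\ref{longexactsequence4}), which identifies the cokernel of $H_2(W)\to H_2(W,W_{i+1})\cong H_2(W_i,\Sigma)$ with $H_1(W_{i+1})$; and part (b) follows from (\ref{longexactsequence2}) with the wedge decomposition, since there the injection $H_2(W)\hookrightarrow H_2(W,\Sigma)\cong (H_1(W_1)\oplus\mathbb{Z}^{2g_1})\oplus(H_1(W_2)\oplus\mathbb{Z}^{2g_2})$ sends $e_j\mapsto (a^1_j,a^2_j)$, so its cokernel is precisely the claimed quotient and equals $H_1(\Sigma)$.

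Finally, for assertion (ii), which (since $H_2(W_i)$ is free and $H_2(\Sigma)=0$) I interpret as the divisibility $|H_1(W_i)|^2 \bigm| |H_1(\Sigma)|$, I would count orders in (\ref{longexactsequence3}). Surjectivity of $H_1(\Sigma)\to H_1(W_i)$ gives $|H_1(\Sigma)|=|H_1(W_i)|\cdot|P_i|$, where $P_i=\mathrm{coker}(H_2(W_i)\to H_2(W_i,\Sigma))$. Because $H_2(\Sigma)=0$, the map $H_2(W_i)\to H_2(W_i,\Sigma)$ is injective, hence its image is a full-rank free subgroup $L$ of $H_1(W_i)\oplus\mathbb{Z}^{2g_i}$; a short counting lemma (for a rank-$2g_i$ free subgroup $L$ of $T\oplus\mathbb{Z}^{2g_i}$ with $T$ finite one has $|(T\oplus\mathbb{Z}^{2g_i})/L|=|T|\cdot|\det(L\to\mathbb{Z}^{2g_i})|$) gives $|P_i|=|H_1(W_i)|\cdot|\det Q_{W_i}|$, where $Q_{W_i}$ is the intersection form of $W_i$. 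Combining, $|H_1(\Sigma)|=|H_1(W_i)|^2\cdot|\det Q_{W_i}|$, which yields the divisibility. I expect the main obstacle throughout to be bookkeeping rather than any single deep step: pinning down the two compatible descriptions of the maps in (iii) so that one choice of the $a^i_j$ simultaneously serves (a) and (b), and cleanly separating free and torsion parts in the order count for (ii).
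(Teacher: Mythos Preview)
Your proposal is correct and follows essentially the same route as the paper: the same four exact sequences, the same Poincar\'e--Lefschetz/UCT identification $H_2(W_i,\Sigma)\cong H_1(W_i)\oplus\mathbb{Z}^{2g_i}$, the same excision splitting $H_2(W,\Sigma)\cong H_2(W_1,\Sigma)\oplus H_2(W_2,\Sigma)$, and the same reading of (i), (iii), (iv) from (\ref{longexactsequence1}), (\ref{longexactsequence2}), (\ref{longexactsequence4}). The only substantive difference is in (ii): the paper argues by writing a presentation matrix for $\coker\bigl(H_2(W_i)\to H_2(W_i,\Sigma)\bigr)$ and observing that its determinant is divisible by $m_1\cdots m_k=|H_1(W_i)|$, whereas you prove the sharper identity $|H_1(\Sigma)|=|H_1(W_i)|^2\cdot|\det Q_{W_i}|$ via the short counting lemma for a full-rank free subgroup of $T\oplus\mathbb{Z}^{2g_i}$; your version is cleaner and yields the paper's divisibility as a corollary. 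You are also more explicit than the paper in (iii) about why a \emph{single} choice of $a^i_j$ (images of a fixed basis of $H_2(W)$) serves simultaneously for (a) and (b), by identifying the two descriptions of $H_2(W)\to H_2(W_i,\Sigma)$ through (\ref{longexactsequence2}) and (\ref{longexactsequence4}); the paper leaves this compatibility implicit.
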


\begin{proof}
\begin{enumerate}[(i)]
\item The long exact sequence (\ref{longexactsequence1}) implies $H_2(W_i)$ is free abelian, since it is mapped injectively into $H_2(W)\cong\mathbb{Z}^{2g}$. The statement would then follow from the claim that $b_2(W_i)=2g_i$. To see this claim, note $\chi(W_i)=2\chi(D^4- (F_i\times D^2))+\chi(F_i\times D^2)-\chi(F_i\times S^1)=1+2g_i$, and we also have $\chi(W_i)=1-b_1(W_i)+b_2(W_i)-b_3(W_i)$. Long exact sequence (\ref{longexactsequence3}) implies $b_1(W_i)=0$. As we have $H_1(W_i)\rightarrow H_1(W_i,\Sigma)\rightarrow 0$ from the long exact sequence for the pair $(W_i, \Sigma)$, we have $b_3(W_i)=\text{rk}H^3(W_i)=\text{rk}H_1(W_i,\Sigma)\leq \text{rk} H_1(W_i)=b_1(W_i)=0$. Therefore, $b_2(W_i)=\chi(W_i)-1=2g_i$.

\item Consider the long exact sequence (\ref{longexactsequence3}).	First we claim $| \coker \{H_2(W_i) \rightarrow H_2(W_i,\Sigma)\} |$ is divisible by $|H_1(W_i)|$. To see the claim, note $H_2(W_i,\Sigma)\cong H^2(W_i)\cong H_2(W_i)\oplus H_1(W_i)$ by Poinc\'{a}re duality and the universal coefficient theorem. Assume $H_1(W_i)=\mathbb{Z}_{m_1}\oplus \cdots\oplus \mathbb{Z}_{m_k}$. Then $H_2(W_i)\oplus H_1(W_i)$ has a presentation matrix of the form 

$$\begin{bmatrix}
    m_1 & 0 & 0 & 0 & \cdots & 0  \\
    \vdots  & \ddots & \vdots & \vdots & \ddots &\vdots\\
       0    & 0  & m_k  & 0 & \cdots & 0
\end{bmatrix}_{\textstyle \raisebox{2pt}{.}}$$

Then $\coker \{H_2(W_i) \rightarrow H_2(W_i,\Sigma)\}$ has a presentation matrix 
$$\begin{bmatrix}
    m_1 & 0 & 0 & 0 & \cdots & 0  \\
    \vdots  & \ddots & \vdots & \vdots & \ddots &\vdots\\
       0    & 0  & m_k  & 0 & \cdots & 0\\
        * & * & * & * & \cdots & *  \\
    \vdots  & \ddots & \vdots & \vdots & \ddots &\vdots\\
       *   & *  & *  & * & \cdots & *
\end{bmatrix}_{\textstyle \raisebox{2pt}{.}}$$
Note $|\coker \{H_2(W_i) \rightarrow H_2(W_i,\Sigma)\}|$ is equal to the absolute value of the determinant of the above matrix, and hence is divisible by $m_1m_2 \cdots m_k=|H_1(W_i)|$. With this claim in mind, the statement then follows easily from (\ref{longexactsequence3}).

\item Note there are isomorphisms $$H_2(W,W_i)\cong H_2(W_{i+1},\Sigma)\cong H_2(W_{i+1})\oplus H_1(W_{i+1}) \cong H_1(W_{i+1})\oplus \mathbb{Z}^{2g_{i+1}}.$$
Here the first isomorphism follows from excision, and the second isomorphism is obtained by Poincar\'e duality and the universal coefficient theorem.  Now (a) follows from this and long exact sequence (\ref{longexactsequence4}). 

Identify a regular neighborhood of $\Sigma$ in $W$ as $\Sigma\times [-1,1]$. Let $W_1'= W_1-\Sigma\times (-1,0]$ and let $W_2'=W_2-\Sigma\times [0,1)$, which are obviously deformation retracts of $W_1$ and $W_2$ respectively. Then
\begin{displaymath}
\begin{aligned}
H_2(W, \Sigma)&\cong H_2(W,\Sigma \times [-1,1]) \\ &\cong H_2(W_1'\cup W_2',\Sigma\times\{0\}\cup\Sigma\times\{1\})\cong H_2(W_1,\Sigma)\oplus H_2(W_2,\Sigma). 
\end{aligned}
\end{displaymath}
Here the second isomorphism is obtained by excision. With this in mind, we have
$$H_2(W,\Sigma)\cong H_2(W_1,\Sigma)\oplus H_2(W_2,\Sigma)\cong H_1(W_1)\oplus \mathbb{Z}^{2g_1} \oplus H_1(W_2)\oplus \mathbb{Z}^{2g_2}.$$
Then (b) follows from this observation and long exact sequences (\ref{longexactsequence2}).

\end{enumerate} 
\end{proof}

These observations motivate the following definitions.
\begin{defn}\label{definition1}

Let $G$ be a finite abelian group. A pair of finite abelian groups $(G_1,G_2)$ is said to be admissible for $G$ if
	\begin{enumerate}
		\item[(i)] $|G_i|^2 \big| |G|$, $i=1,2$.
		\item[(ii)] There exist $n_1, n_2 \in \mathbb{N}$ and $a^i_j \in G_i\oplus\mathbb{Z}^{2n_i}$ for $i=1,2$ and $j=1,2,...,2(n_1+n_2)$, such that the following requirements are satisfied.
		\begin{itemize}
		\item[(1)] $\frac{G_i\oplus \mathbb{Z}^{2n_i}}{\langle  \{ a^i_j|, j=1,...,2(n_1+n_2)\}\rangle}\cong G_{i+1}$ for $i=1,2$, where we let $G_3=G_1$.
		\item[(2)]$\frac{(G_1\oplus \mathbb{Z}^{2n_1})\oplus(G_2\oplus \mathbb{Z}^{2n_2})}{\langle  \{ (a^1_j,a^2_j)| j=1,...,2(n_1+n_2)\}  \rangle}\cong G$.
		\end{itemize}
	\end{enumerate}

\end{defn}

\begin{defn}
Let $G$ be a finite abelian group and let $(G_1,G_2)$ be an admissible pair for $G$. We define a numerical invariant
$$\theta_1(G,G_1,G_2)=\min \{n_1+n_2 \mid n_1,\,  n_2 \,\text{are as in Definition \ref{definition1}} \}.$$
\end{defn}

It follows from Proposition \ref{extendability} that $$\theta_1(H_1(\Sigma),H_1(W_1),H_1(W_2))\leq g_{ds}(K).$$ However, $\theta_1$ has an apparent drawback in that $H_1(W_i)$ cannot be inferred from the knot $K$, and taking a further minimum over all possible admissible pairs would lead to rather trivial bounds. 

\subsection{Double slice genus and Casson-Gordon invariants}
In this subsection, we use Casson-Gordon invariants to construct a lower bound for the double slice genus. First we recall the relevant facts about Casson-Gordon invariants below. Detailed information may be found in \cite{CG86, Gil81, Rub83}.

We begin with recalling the definition of Casson-Gordon invariants. Let $M$ be an oriented three manifold equipped with a character $\phi: H_1(M)\rightarrow \mathbb{Z}_d$, where $d$ is a non-negative integer. By bordism theory, there exists some positive integer $r$ such that $r\cdot (M,\phi)=\partial (V,\phi')$, where $V$ is a compact 4-manifold and $\phi': H_1(V)\rightarrow \mathbb{Z}_d$ is a character that restricts to $\phi$ on the boundary. $\phi'$ determines a cyclic cover $\widetilde{V}\rightarrow V$ with a preferred covering transformation $T \colon \widetilde{V}\rightarrow \widetilde{V}$. Let $T_* \colon H_2(\widetilde{V};\mathbb{C})\rightarrow H_2(\widetilde{V};\mathbb{C})$ be the induced automorphism and let $\bar{H}_2(V,\phi')$ be the $e^{2\pi i/d}-\text{eigenspace of }T_*$. Note that the intersection form on $H_2(\widetilde{V};\mathbb{Z})$ extends naturally to a Hermitian pairing  $\langle\, , \, \rangle$ on $H_2(\widetilde{V};\mathbb{C})=H_2(\widetilde{V};\mathbb{Z})\otimes \mathbb{C}$. Let $\bar{\sigma}(V,\phi')$ denote the signature of this Hermitian pairing restricted to $\bar{H}_2(V,\phi')$. Then define the Casson-Gordon invariant associated to $(M,\phi)$ as $$\sigma(M,\phi)=\frac{1}{r}(\bar{\sigma}(V,\phi')-\sigma(V)).$$ Here $\sigma(V)$ denotes the usual signature invariant of $V$. 

The key fact that relates Casson-Gordon invariants and the double slice genus is the following proposition due to Gilmer. 
\begin{prop}[Proposition 1.4 of \cite{Gil81}]\label{smiththeory}
If $\phi'$ is of prime power order, i.e. $d=p^n$ for some prime $p$ and positive integer $n$, let $\bar{b}_2(V):=\dim_\mathbb{C} \bar{H}_2(V,\phi')$ and let $b_2(V,\mathbb{Z}_p):=\dim_{\mathbb{Z}_p} H_2(V;\mathbb{Z}_p)$. Then $$ \bar{b}_2(V) \leq b_2(V,\mathbb{Z}_p).$$
\end{prop}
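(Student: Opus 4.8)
The plan is to treat this as a statement in Smith theory for the free $\mathbb{Z}_{p^n}$-action given by the deck transformation $T$ of the cyclic cover $\pi\colon \tilde V\to V$. Since $T^{p^n}=1$, over $\mathbb{Z}_p$ the operator $\sigma:=T-1$ is nilpotent with $\sigma^{p^n}=0$, so the chain complex $C:=C_*(\tilde V;\mathbb{Z}_p)$ is naturally a complex of modules over the local ring $R:=\mathbb{Z}_p[\mathbb{Z}_{p^n}]=\mathbb{Z}_p[\sigma]/(\sigma^{p^n})$. Because the action is free, $C$ is a complex of \emph{free} $R$-modules, and the quotient complex computing $H_*(V;\mathbb{Z}_p)$ is exactly $C/\sigma C=C\otimes_R\mathbb{Z}_p$. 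First I would record this algebraic picture and, if convenient, reduce to the case $d=p$ by filtering through the tower of intermediate $\mathbb{Z}_p$-covers $\tilde V=V_n\to V_{n-1}\to\cdots\to V_0=V$ and inducting; otherwise I would work with $R$ directly.

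The second step is the eigenvalue bookkeeping that converts the complex quantity $\bar b_2(V)$ into a mod-$p$ quantity. Over $\mathbb{C}$ the operator $T_*$ on $H_2(\tilde V;\mathbb{C})$ is diagonalizable with $p^n$-th-root-of-unity eigenvalues, and $\bar b_2(V)$ is the multiplicity $m_\omega$ of the primitive root $\omega=e^{2\pi i/p^n}$. Since $H_2(\tilde V;\mathbb{Q})$ is defined over $\mathbb{Q}$, all primitive $p^n$-th roots occur with the same multiplicity, so $\dim_{\mathbb{C}}\ker\Phi_{p^n}(T_*)=\varphi(p^n)\,m_\omega$, where $\Phi_{p^n}$ is the cyclotomic polynomial. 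The crucial reduction is $\Phi_{p^n}(x)\equiv(x-1)^{\varphi(p^n)}=\sigma^{\varphi(p^n)}\pmod p$: the faithful isotypic part upstairs corresponds, mod $p$, precisely to the Jordan blocks of $\sigma$ on $H_2(\tilde V;\mathbb{Z}_p)$ of size at least $\varphi(p^n)$. Comparing the integral, rational, and mod-$p$ structures of the lattice $H_2(\tilde V;\mathbb{Z})/\mathrm{tors}$ as a $\mathbb{Z}[\mathbb{Z}_{p^n}]$-module, I would bound $m_\omega$ by the number of such large $\sigma$-blocks, which is at most $\dim_{\mathbb{Z}_p}\ker(\sigma\mid H_2(\tilde V;\mathbb{Z}_p))$.

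Finally I would connect this to $H_2(V;\mathbb{Z}_p)$ using the Smith exact sequences coming from the short exact sequences $0\to\sigma^{p^n-j}C\to C\xrightarrow{\sigma^j}\sigma^jC\to 0$ and $0\to\sigma^jC\to C\to C/\sigma^jC\to 0$, the case $j=1$ giving the quotient complex $C/\sigma C=C_*(V;\mathbb{Z}_p)$. The point to extract is that every Jordan block of $\sigma$ on $H_2(\tilde V;\mathbb{Z}_p)$ of size $\ge\varphi(p^n)$ forces an independent class in $H_2(V;\mathbb{Z}_p)$, so $m_\omega\le b_2(V,\mathbb{Z}_p)$. I expect the main obstacle to be exactly this last survival statement: controlling the spectral sequence of the covering (equivalently, the interleaving Smith sequences) so that contributions to the faithful eigenspace are not cancelled by differentials, by torsion in $H_*(\tilde V;\mathbb{Z})$, or by classes in adjacent degrees. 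This char-$0$-to-char-$p$ comparison, rather than any individual exact sequence, is the heart of the argument.
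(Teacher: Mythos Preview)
The paper does not prove this proposition: it is quoted as Proposition~1.4 of Gilmer~\cite{Gil81} and used as a black box, so there is no in-paper argument to compare against. What you are really trying to do is reconstruct Gilmer's proof.

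Your plan works upstairs on $\tilde V$ and then tries to descend, and you have correctly located the genuine difficulty: the ``survival'' of large Jordan blocks through the Smith sequences is delicate, because $H_*(V;\mathbb Z_p)$ is \emph{not} simply the $\sigma$-invariants of $H_*(\tilde V;\mathbb Z_p)$. Differentials in the spectral sequence of the covering, fed by classes in adjacent degrees and by torsion in $H_*(\tilde V;\mathbb Z)$, can in principle kill exactly the classes you need. Your outline gives no mechanism to control this, so as written it is a plausible strategy with an acknowledged hole rather than a proof.

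There is a cleaner route that never leaves $V$ and sidesteps the issue entirely. Interpret $\bar H_2(V,\phi')$ as twisted homology $H_2\bigl(C_*(\tilde V)\otimes_{\mathbb Z[\mathbb Z_d]}\mathbb C_\omega\bigr)$, and replace $\mathbb C$ by the ring $R=\mathbb Z[\omega]$: the complex $C_*(\tilde V)\otimes_{\mathbb Z[\mathbb Z_d]}R_\omega$ consists of finitely generated free $R$-modules. Tensoring up to the fraction field recovers $\bar H_2(V,\phi')$; tensoring down to the residue field $R/(1-\omega)\cong\mathbb F_p$ gives $C_*(V;\mathbb F_p)$ with \emph{untwisted} coefficients, precisely because $\omega\equiv 1\pmod{1-\omega}$. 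The desired inequality is then just semicontinuity of Betti numbers for a complex of free modules over the DVR $R_{(1-\omega)}$, i.e.\ the universal coefficient theorem. This uses the same cyclotomic fact $\Phi_{p^n}(x)\equiv(x-1)^{\varphi(p^n)}\pmod p$ that you isolated, but at the level of coefficient rings rather than Jordan blocks on $H_*(\tilde V)$, which is exactly what makes the spectral-sequence obstruction evaporate.
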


With the above preparation on Casson-Gordon invariants, we return to the double slice genus. Let $K$, $W_i$ for $i=1,2$, $W$ and $\Sigma$ be as in Subsection 2.1. Let $p$ be a prime, and for a finite abelian group $G$, define $\xi_p(G):=\dim_{\mathbb{Z}_p}G\otimes \mathbb{Z}_p$. 
\begin{thm}\label{lowerbound by CG} 
Let $d=p^n$ and $\phi_i: H_1(\Sigma)\rightarrow \mathbb{Z}_d$ be a character that factors through $H_1(W_i)$ along the inclusion induced homomorphism $\iota_i: H_1(\Sigma)\rightarrow H_1(W_i)$, $i=1,2$. Then
\begin{enumerate}[(i)]
\item $|\sigma(\Sigma,\phi_1)-\sigma(\Sigma,\phi_2)|-\xi_p(H_1(W_1)\oplus H_1(W_2))\leq 2g_{ds}(K).$

\item $|\sigma(\Sigma,\phi_i)+\sigma(K)|-\xi_p(H_1(W_i))\leq 2g_i.$
\end{enumerate}  
\end{thm}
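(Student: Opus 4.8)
The plan is to bound the Casson--Gordon invariants $\sigma(\Sigma,\phi_i)$ by constructing convenient $4$-manifolds that bound $(\Sigma,\phi_i)$ and then applying Proposition~\ref{smiththeory}. The key observation is that since $\phi_i$ factors through $H_1(W_i)$, the character $\phi_i$ extends over $W_i$; that is, there is a character $\phi_i' \colon H_1(W_i) \to \mathbb{Z}_d$ restricting to $\phi_i$ on $\Sigma = \partial W_i$. So I would first take $V = W_i$ (possibly after passing to $r$ copies, though I expect $r=1$ works here since $\phi_i$ already extends). Then by the very definition of the Casson--Gordon invariant, $\sigma(\Sigma,\phi_i) = \bar{\sigma}(W_i,\phi_i') - \sigma(W_i)$. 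The term $\sigma(W_i)$ can be controlled because $W_i$ embeds in $W \cong \#_g S^2\times S^2$, which has signature zero; more carefully, I would relate $\sigma(W_i)$ to $\sigma(K)$ and to the signature of the other piece, using the fact that the intersection form of $W$ is standard and splits along $\Sigma$.

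For part (ii), the plan is as follows. First, estimate $\bar{\sigma}(W_i,\phi_i')$ trivially by its rank: $|\bar{\sigma}(W_i,\phi_i')| \le \bar{b}_2(W_i)$, and then apply Proposition~\ref{smiththeory} to get $\bar{b}_2(W_i) \le b_2(W_i,\mathbb{Z}_p) = \dim_{\mathbb{Z}_p} H_2(W_i;\mathbb{Z}_p)$. By the universal coefficient theorem and Proposition~\ref{extendability}(i), $H_2(W_i;\mathbb{Z}_p)$ has dimension $2g_i + \xi_p(\text{torsion of } H_1(W_i))$; since $H_1(W_i)$ is all torsion (as $\Sigma$ is a rational homology sphere), this is $2g_i + \xi_p(H_1(W_i))$. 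The signature term $\sigma(K)$ enters because $W_i$ is the double branched cover of $(D^4, F_i)$ where $F_i$ is a pushed-in Seifert surface perturbation bounding $K$, and the ordinary signature $\sigma(W_i)$ computes (up to the classical relation) the knot signature $\sigma(K)$; this is where I would invoke the standard fact that $\sigma(K) = \sigma(W_i)$ when $F_i$ is a slice surface, or carefully track the defect. Combining $\sigma(\Sigma,\phi_i) = \bar{\sigma}(W_i,\phi_i') - \sigma(W_i)$ with these bounds yields $|\sigma(\Sigma,\phi_i) + \sigma(K)| = |\bar{\sigma}(W_i,\phi_i')| \le 2g_i + \xi_p(H_1(W_i))$, which is part (ii).

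For part (i), I would form the closed-up manifold by gluing: set $V = W_1 \cup_{\Sigma} (-W_2) = W$, but now I want a character on $W$ restricting appropriately. The issue is that $\phi_1$ and $\phi_2$ factor through different quotients, so they need not agree as characters on $\Sigma$. The natural move is to consider $\sigma(\Sigma,\phi_1) - \sigma(\Sigma,\phi_2)$ as a single Casson--Gordon-type quantity computed on $W$ with the character induced by $\phi_1$ on $W_1$ and $\phi_2$ on $W_2$; the difference of signatures over the two pieces assembles, via Novikov additivity for the twisted signature and the fact that $\sigma(W) = 0$, into $\bar{\sigma}(W,\Phi)$ for the combined character $\Phi$. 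Then Proposition~\ref{smiththeory} bounds $\bar{b}_2(W)$ by $b_2(W,\mathbb{Z}_p)$, and since $W \cong \#_g S^2\times S^2$ one has $b_2(W,\mathbb{Z}_p) = 2g = 2g_{ds}(K)$ in the optimal case; the correction $\xi_p(H_1(W_1)\oplus H_1(W_2))$ comes from comparing $\bar{b}_2(W)$ computed from the glued character against the contributions of the individual pieces.

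The hard part, I expect, will be part (i): precisely organizing how the two characters $\phi_1,\phi_2$ combine over the glued cobordism $W$, and verifying that Novikov additivity applies to the twisted ($\omega$-eigenspace) signatures so that $\bar\sigma(W_1,\phi_1') - \bar\sigma(W_2,\phi_2')$ really equals the twisted signature of $W$ for the assembled character. A subtlety is that the two characters must be made compatible along $\Sigma$ to define a cover of $W$, which likely forces working with the direct-sum character $\Sigma \to \mathbb{Z}_d \oplus \mathbb{Z}_d$ or passing to a common target, and carefully accounting for the $\xi_p$ error term that measures the failure of the individual $H_2(W_i;\mathbb{Z}_p)$ contributions to add up cleanly. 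Controlling $\sigma(W)$ and the relation to $\sigma(K)$ is comparatively routine once the branched-cover signature formula is invoked.
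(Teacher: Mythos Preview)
Your plan for part~(ii) is essentially the paper's argument: extend $\phi_i$ over $W_i$, use $\sigma(\Sigma,\phi_i)=\bar\sigma(W_i,\phi_i')-\sigma(W_i)$, invoke $\sigma(W_i)=\pm\sigma(K)$ (this is the cited result of Kauffman--Taylor), bound $|\bar\sigma(W_i,\phi_i')|\le \bar b_2(W_i)\le b_2(W_i;\mathbb{Z}_p)=2g_i+\xi_p(H_1(W_i))$ via Proposition~\ref{smiththeory}. That is correct.

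For part~(i) you have overcomplicated matters, and the route you sketch runs into a real obstruction you yourself flag: the characters $\phi_1',\phi_2'$ need not agree on $\Sigma$, so there is in general \emph{no} combined character $\Phi$ on $W$, no cover of $W$ to speak of, and hence no way to invoke Proposition~\ref{smiththeory} on $W$ or to appeal to Novikov additivity for twisted signatures. Your proposed fix (a direct-sum character into $\mathbb{Z}_d\oplus\mathbb{Z}_d$) changes the target group and is not what is needed here; it also leaves unexplained where the $\xi_p(H_1(W_1)\oplus H_1(W_2))$ term would come from, since $H_1(W)=0$ gives $b_2(W;\mathbb{Z}_p)=2g$ with no correction.

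The paper's point is that no gluing of characters is required at all. Since $\Sigma=\partial W_1=-\partial W_2$, computing each Casson--Gordon invariant on its own $W_i$ gives
\[
\sigma(\Sigma,\phi_1)-\sigma(\Sigma,\phi_2)=\bar\sigma(W_1,\phi_1')+\bar\sigma(W_2,\phi_2')-\big(\sigma(W_1)+\sigma(W_2)\big).
\]
Only the \emph{untwisted} signatures are added, and ordinary Novikov additivity plus $\sigma(W)=\sigma(\#_g S^2\times S^2)=0$ kills that term. Then one simply bounds
\[
|\bar\sigma(W_1,\phi_1')+\bar\sigma(W_2,\phi_2')|\le \bar b_2(W_1)+\bar b_2(W_2)\le b_2(W_1;\mathbb{Z}_p)+b_2(W_2;\mathbb{Z}_p),
\]
applying Proposition~\ref{smiththeory} to each $W_i$ separately. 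The right-hand side is $(2g_1+\xi_p(H_1(W_1)))+(2g_2+\xi_p(H_1(W_2)))=2g_{ds}(K)+\xi_p(H_1(W_1)\oplus H_1(W_2))$, which is exactly the desired inequality and explains the $\xi_p$ term transparently. In short: the ``hard part'' you anticipated disappears once you stop trying to build a cover of $W$.
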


\begin{proof}
\leavevmode
\begin{enumerate}[(i)]
\item  Note $\Sigma=\partial W_1=-\partial W_2$, hence 
\begin{displaymath}
\begin{aligned}
|\sigma(\Sigma,\phi_1)-\sigma(\Sigma,\phi_2)|&=|\bar{\sigma}(W_1,\phi_1)+\bar{\sigma}(W_2,\phi_2)-\sigma(W_1)-\sigma(W_2)|\\
&=|\bar{\sigma}(W_1,\phi_1)+\bar{\sigma}(W_2,\phi_2)-\sigma(W)|\\
&=|\bar{\sigma}(W_1,\phi_1)+\bar{\sigma}(W_2,\phi_2)|\\
&\leq \bar{b}_2(W_1)+\bar{b}_2(W_2)\\
&\leq b_2(W_1;\mathbb{Z}_p)+b_2(W_2;\mathbb{Z}_p)\\
&=2g_1+\xi_p(H_1(W_1))+2g_2+\xi_p(H_1(W_2)).
\end{aligned}
\end{displaymath}
Here we used Novikov additivity for the second equality, $\sigma(W)=\sigma(\#_g S^2\times S^2)=0$ for the third equality, and the universal coefficient theorem for the last equality.  Finally, note $g_{ds}(K)=g_1+g_2$ and $\xi_p(H_1(W_1)\oplus H_1(W_2))=\xi_p(H_1(W_1))+\xi_p(H_1(W_2))$, hence the statement follows.

\item Note that $\sigma(K)=\sigma(W_1)=-\sigma(W_2)$ by Theorem 3.1 of \cite{KT76}. Similar to the argument in (i) above we have
\begin{displaymath}
\begin{aligned}
|\sigma(\Sigma,\phi_i)+\sigma(K)|&=|\bar{\sigma}(\Sigma,\phi_i)|\\
&\leq \bar{b}_2(W_i)\\
&\leq b_2(W_i;\mathbb{Z}_p)\\
&=2g_i+\xi_p(H_1(W_i)).
\end{aligned}
\end{displaymath}
The statement readily follows.
  \end{enumerate}
\end{proof}

The following definition is motivated by the above theorem. 
\begin{defn}
Let $(G_1,G_2)$ be an admissible pair for $H_1(\Sigma)$. Define
$$
\begin{aligned}
\theta_2(\Sigma,G_1,G_2)=&\frac{1}{2} \min_{(\iota_1, \iota_2)}\max_{(\phi_1,\phi_2,p)}\{|\sigma(\Sigma,\phi_1)-\sigma(\Sigma,\phi_2)|-\xi_p(G_1\oplus G_2)\, \big| \, \\ &\phi_i\, \text{factors through}\, G_i \, \text{via}\, \iota_i\, \text{and its order is a power of the prime}\, p\}
\end{aligned}
$$
and
$$
\begin{aligned}
\theta_3(K,G_1,G_2)=&\frac{1}{2}\min_{(\iota_1, \iota_2)}\max_{(\phi_1,\phi_2,p)}\{\max(0, |\sigma(\Sigma,\phi_1)+\sigma(K)|-\xi_p(G_1))+\\ &\max(0,|\sigma(\Sigma,\phi_2)+\sigma(K)|-\xi_p(G_2))\,\big| \, \\ &\phi_i\, \text{factors through}\, G_i \, \text{via}\, \iota_i\, \text{and its order is a power of the prime}\, p\},
\end{aligned}
$$
where in both equations the maximum is taken over all primes $p$ and characters $\phi_1$ and $\phi_2$ satisfying the constraints, and the minimum is taken over all surjective homomorphisms $\iota_i: H_1(\Sigma)\rightarrow G_i$ for $i=1,2$.
\end{defn}

In view of Theorem \ref{lowerbound by CG} we clearly have $\theta_2(\Sigma,H_1(W_1),H_1(W_2))\leq g_{ds}(K)$ and $ \theta_3(K,H_1(W_1),H_1(W_2))\leq g_{ds}(K)$. However, like $\theta_1$, these invariants are difficult to utilize since one has little control of $H_1(W_i)$ for $i=1,2$.

\subsection{Combining $\theta_i$}
So far we have defined various $\theta_i$'s, all of which require the input of an admissible pair that cannot be deduced from the knot. One obvious remedy is to take a minimum over all the admissible pairs, which unfortunately does not lead to a useful lower bound if one uses a single $\theta_i$. However, this can be overcome by combining these invariants. First note that $\theta_1$ and $\theta_2$ are well defined if we replace $\Sigma$ with an arbitrary rational homology sphere. This allows us to make the following definition.

\begin{defn}
\begin{enumerate}[(i)]
\item Given a rational homology $3$-sphere $Y$, define
$$\delta(Y)=\min_{(G_1,G_2)}\max\{\theta_1(H_1(Y),G_1,G_2), \theta_2(Y,G_1,G_2)\}.$$
Here the minimum is taken over all admissible pairs $(G_1,G_2)$ for $H_1(Y)$.

\item Let $K$ be a knot in $S^3$, and let $\Sigma$ be the two-fold branched cover of $S^3$ along $K$. Define
$$\delta(K)=\delta(\Sigma).$$
\end{enumerate}
\end{defn}

Recall every closed, orientable $3$-manifold $Y$ embeds in $\#_n S^2\times S^2$ for sufficiently large $n$, and the minimum such $n$ is defined to be the \emph{embedding number} $\epsilon(Y)$ \cite{AGL17}. We have the following theorem on the $\delta$-invariant.
\begin{thm}
Let $Y$ be a rational homology 3-sphere. Then $$\delta(Y)\leq \epsilon(Y).$$ In particular, for a knot $K\subset S^3$, we have $\delta(K)\leq \epsilon(\Sigma) \leq g_{ds}(K)$.
\end{thm}

\begin{proof}
Embedding $Y$ in $\#_{\epsilon(Y)} S^2\times S^2$ separates $\#_{\epsilon(Y)} S^2\times S^2$ into two 4-manifolds $V_1$ and $V_2$. The proofs of Theorem \ref{extendability} and Theorem \ref{lowerbound by CG} carry over verbatim with $\Sigma$, $W_1$ and $W_2$ replaced by $Y$, $V_1$ and $V_2$. Therefore, 
\begin{displaymath}
\begin{aligned}
\delta(Y)&\leq \max \{\theta_1(H_1(Y),H_1(V_1),H_1(V_2)), \theta_2(Y, H_1(V_1), H_1(V_2))\}\\
&\leq \frac{1}{2}b_2(V_1\cup_Y V_2) = \frac{1}{2}b_2(\#_{\epsilon(Y)} S^2\times S^2)\\
&=\epsilon(Y).
\end{aligned}
\end{displaymath}
Given a knot $K$, note the two-fold branched cover $\Sigma$ embeds in $\#_{g_{ds}(K)} S^2 \times S^2$ (see the first paragraph of Subsection 2.1). Therefore, we have $\epsilon(\Sigma) \leq g_{ds}(K)$.
\end{proof}

When only interested in knots, we can use $\theta_3$ instead of $\theta_2$ to give a better lower bound for the double slice genus.

\begin{defn}\label{definition for theta}
Let $K$ be a knot in the 3-sphere. Define 
$$\theta(K)=\min_{(G_1,G_2)}\max\{\theta_1(H_1(\Sigma(K)),G_1,G_2), \theta_3(K,G_1,G_2)\}.$$
Here the minimum is taken over all admissible pairs $(G_1,G_2)$ for $H_1(\Sigma(K))$.
\end{defn}
\begin{thm}\label{theta is lower bound for ds gensu}
For any knot $K\subset S^3$, $\theta(K)\leq g_{ds}(K)$.
\end{thm}
\begin{proof}
Let $W_1$, $W_2$ and $W$ be as in Subsection 2.1. Then in view of Proposition \ref{extendability} and Theorem \ref{lowerbound by CG}, we have
\begin{displaymath}
\begin{aligned}
\theta(K)&\leq \max \{\theta_1(H_1(\Sigma(K)),H_1(W_1),H_1(W_2)), \theta_3(K,H_1(W_1),H_1(W_2))\}\\
&\leq \frac{1}{2}b_2(W_1)+\frac{1}{2}b_2 (W_2) = \frac{1}{2}b_2(W)\\
&=g_{ds}(K).
\end{aligned}
\end{displaymath}
\end{proof}

\subsection{Proof of Theorem \ref{thm1}}
In this subsection we prove Theorem \ref{thm1} using the $\theta$-invariant of Definition \ref{definition for theta}.

We begin by constructing the knots. Take $J$ to be the two-bridge knot corresponding to $\frac{9}{4}$, which is known to be ribbon (e.g. see \cite{CG86}). Let $K_n=\overbrace{J\#\cdots\#J}^{n}$. Note $\Sigma(J)=L(9,4)$ and hence $\Sigma(K_n)=\overbrace{L(9,4)\#\cdots \#L(9,4)}^{n}$. 

To estimate $\theta(K_n)$, we need to understand the behavior of Casson-Gordon invariants of $\Sigma(K_n)$. This
is addressed in the following technical proposition.

\begin{prop}\label{technicalproposition}
Let $m$ be a nonnegative integer and $s: H_1(\Sigma(K_n))\rightarrow \overbrace{\mathbb{Z}_9\oplus\cdots\oplus \mathbb{Z}_9}^{m}$ be a surjective map, then there exists a map $j:\overbrace{\mathbb{Z}_9\oplus\cdots\oplus \mathbb{Z}_9}^{m}\rightarrow \mathbb{Z}_9$ such that $\sigma(\Sigma(K_n), j \circ s)\geq \frac{10}{9}m$.
\end{prop}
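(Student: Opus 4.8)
The plan is to exploit the behaviour of Casson--Gordon invariants under connected sum together with an averaging argument over the choice of $j$, thereby reducing everything to an explicit computation for the single lens space $L(9,4)$.

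First I would record that $\Sigma(K_n)=\#_n L(9,4)$ has $H_1(\Sigma(K_n))\cong(\mathbb{Z}_9)^n$, with one cyclic summand $\langle x_i\rangle$ arising from each connected summand. Casson--Gordon invariants are additive under connected sum: if $\phi\colon H_1(\Sigma(K_n))\to\mathbb{Z}_9$ restricts to $\phi_i$ on the $i$-th summand, then
$$\sigma(\Sigma(K_n),\phi)=\sum_{i=1}^{n}\sigma\big(L(9,4),\phi_i\big)=\sum_{i=1}^{n} f\big(\phi(x_i)\big),$$
where I write $f(c)=\sigma(L(9,4),c)$ for the Casson--Gordon invariant of $L(9,4)$ with the character sending a fixed generator to $c\in\mathbb{Z}_9$. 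Thus for $\phi=j\circ s$ the problem becomes: choose $j$ so that $\sum_i f\big(j(s(x_i))\big)\geq\frac{10}{9}m$, and the whole question is reduced to understanding the nine numbers $f(0),\dots,f(8)$.

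For the choice of $j$ I would argue by averaging over all $j\in\Hom((\mathbb{Z}_9)^m,\mathbb{Z}_9)$ and then extract a $j$ beating the average. Set $v_i=s(x_i)\in(\mathbb{Z}_9)^m$. Since $s$ is surjective the reductions $\bar v_i$ span $(\mathbb{Z}_3)^m$, so at least $m$ of the $v_i$ are nonzero modulo $3$; for such a ``unit'' vector the evaluation $j\mapsto j(v_i)$ is a surjection onto $\mathbb{Z}_9$, hence $j(v_i)$ is equidistributed over $\mathbb{Z}_9$ and contributes $\frac{1}{9}\sum_{c=0}^{8}f(c)$ to the average. A vector with $v_i\in 3(\mathbb{Z}_9)^m\setminus\{0\}$ makes $j(v_i)$ equidistributed over $\{0,3,6\}$ and contributes $\frac{1}{3}\big(f(3)+f(6)\big)$, while $v_i=0$ contributes $f(0)=0$. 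Provided $\frac{1}{9}\sum_{c=0}^{8}f(c)\geq\frac{10}{9}$ and $f(3)+f(6)\geq 0$, the average is at least $\frac{10}{9}m$ (using that there are at least $m$ unit vectors and that the remaining contributions are nonnegative), so some $j$ realizes the bound.

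Everything therefore hinges on the arithmetic of $f$, and I expect the explicit evaluation of the lens-space Casson--Gordon invariants to be the main obstacle --- indeed this is naturally the computation one postpones. Concretely I would bound $L(9,4)$ by the linear plumbing $4$-manifold associated with the continued fraction expansion $9/4=[3,2,2,2]^-$, extend the prime-power-order character over it, and compute $\bar\sigma-\sigma$ via the twisted and untwisted intersection forms; the nonnegativity $f(3),f(6)\geq 0$ should follow from these being order-$3$ characters together with the symmetry $f(6)=f(-3)=f(3)$ coming from complex conjugation. The delicate points are fixing the normalization and signs so that $\sum_{c=0}^{8}f(c)=10$ (equivalently $\frac{1}{9}\sum_{c}f(c)=\frac{10}{9}$) comes out exactly, and confirming that the non-unit contributions never drag the average below $\frac{10}{9}m$.
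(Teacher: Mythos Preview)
Your reduction to the nine numbers $f(c)=\sigma(L(9,4),\chi_c)$ and your use of additivity under connected sum are exactly right, and the observation that a surjection forces at least $m$ of the $v_i$ to be units mod $3$ is fine. The gap is in the numerical step that is supposed to close the argument: the identity $\sum_{c=0}^{8}f(c)=10$ is false. From the paper's table one has
\[
\sum_{c=0}^{8} f(c)=0+\tfrac{5}{9}+\tfrac{11}{9}+1-\tfrac{1}{9}-\tfrac{1}{9}+1+\tfrac{11}{9}+\tfrac{5}{9}=\tfrac{48}{9},
\]
so the average contribution of a unit vector is $\tfrac{1}{9}\cdot\tfrac{48}{9}=\tfrac{16}{27}\approx 0.59$, not $\tfrac{10}{9}\approx 1.11$. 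Your averaging over all $j$ therefore yields only $\tfrac{16}{27}m$ from the guaranteed $m$ unit vectors, together with nonnegative contributions from the rest; this falls well short of $\tfrac{10}{9}m$, and no sign check on $f(3),f(6)$ can repair it. In other words, the maximum over $j$ is genuinely larger than the average, and one needs to find the good $j$ rather than average.

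The paper's argument does precisely that. After putting $s$ into the block form $[\,I_m\mid A\,]$ (a basis lemma over $\mathbb{Z}_9$), it tests two specific characters, $j=(2,\dots,2)$ and $j=(6,\dots,6)$. The first gives $f(2)=\tfrac{11}{9}$ on each of the $m$ identity columns; the only negative values of $f$ are $f(4)=f(5)=-\tfrac19$, so if fewer than $m$ of the remaining columns hit $\{4,5\}$ one already gets $\geq\tfrac{11}{9}m-\tfrac{1}{9}m=\tfrac{10}{9}m$. Otherwise one switches to $j=(6,\dots,6)$: now the identity columns contribute $f(6)=1$ each, every remaining column lands in $\{0,3,6\}$ (hence contributes $\geq 0$), and each column that previously hit $\{4,5\}$ now hits $\{3,6\}$ and contributes $1$, giving $\geq 2m>\tfrac{10}{9}m$. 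So the missing idea is this two-choice dichotomy exploiting the specific values of $f$, not an averaging bound.
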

 
 The proof of this proposition appears in Appendix A. 
 
Theorem \ref{thm1} follows from the next theorem. 
 \begin{thm}\label{small thm1}
 $\theta(K_{110n}) \geq n$, and hence $g_{ds}(K_{110n}) \geq n$.
 \end{thm}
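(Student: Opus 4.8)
The plan is to bound $\theta(K_{110n})$ from below by playing the two invariants $\theta_1$ and $\theta_3$ against each other, so that no single extendable pair $(G_1,G_2)$ can keep both small simultaneously. Since $\theta(K)=\min_{(G_1,G_2)}\max\{\theta_1,\theta_3\}$, I must show that for \emph{every} extendable pair $(G_1,G_2)$ for $H_1(\Sigma(K_{110n}))=\bigoplus_{110n}\mathbb{Z}_9$, at least one of $\theta_1(H_1(\Sigma),G_1,G_2)$ or $\theta_3(K_{110n},G_1,G_2)$ is at least $n$. The tension I want to exploit is the following: to make $\theta_3$ small, the characters $\phi_i$ factoring through $G_i$ must not detect large Casson--Gordon signatures, which by Proposition \ref{technicalproposition} forces $G_i$ to have few $\mathbb{Z}_9$-summands (small $\xi_3(G_i)$); but making the $G_i$ small in this sense conflicts with the admissibility/extendability constraints of Definition \ref{definition1}, which govern $\theta_1$ through the requirement that $H_1(\Sigma)$ be recoverable as a quotient of $(G_1\oplus\mathbb{Z}^{2n_1})\oplus(G_2\oplus\mathbb{Z}^{2n_2})$ by $2(n_1+n_2)$ relations.

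First I would set $G=H_1(\Sigma)=\bigoplus_{110n}\mathbb{Z}_9$ and fix an arbitrary extendable pair $(G_1,G_2)$ with witnessing integers $n_1,n_2$ and relations $a^i_j$. The working hypothesis is that $\theta_3 < n$, and under this hypothesis I would derive an upper bound on $\xi_3(G_1)+\xi_3(G_2)$. Concretely, for the character $\phi_i=j\circ s_i$ built from the surjection $s_i\colon H_1(\Sigma)\twoheadrightarrow\bigoplus_{\xi_3(G_i)}\mathbb{Z}_9$ (the mod-$3$ reduction of the projection onto $G_i$) and the map $j$ supplied by Proposition \ref{technicalproposition}, we get $|\sigma(\Sigma,\phi_i)+\sigma(K_{110n})|\geq\frac{10}{9}\xi_3(G_i)-|\sigma(K_{110n})|$. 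Since $J$ is the two-bridge knot $9/4$ with $\Sigma(J)=L(9,4)$, $\sigma(K_{110n})=110n\cdot\sigma(J)$ is a fixed linear quantity that I would compute explicitly; the point is that the $\frac{10}{9}\xi_3(G_i)$ growth dominates once $\xi_3(G_i)$ is large relative to $\sigma(K_{110n})$. Feeding this into the definition of $\theta_3$, the assumption $\theta_3<n$ yields an inequality of the shape $\xi_3(G_1)+\xi_3(G_2)\leq C\cdot n$ for an explicit constant $C$, where the specific constants $10/9$, the value of $\sigma(J)$, and the multiplier $110$ are tuned precisely so that $C<2$ (the factor $110$ in the index is there to absorb the additive signature term and round the constants favorably).

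Next I would turn to $\theta_1$ and show that a small value of $\xi_3(G_1)+\xi_3(G_2)$ forces $n_1+n_2$ to be large. The mechanism is the admissibility condition (ii.2) of Definition \ref{definition1}: $\bigoplus_{110n}\mathbb{Z}_9\cong\big((G_1\oplus\mathbb{Z}^{2n_1})\oplus(G_2\oplus\mathbb{Z}^{2n_2})\big)/\langle(a^1_j,a^2_j)\rangle$ with $2(n_1+n_2)$ generators of the relation subgroup. Reducing everything mod $3$, the target $\bigoplus_{110n}\mathbb{Z}_9\otimes\mathbb{Z}_3=\bigoplus_{110n}\mathbb{Z}_3$ has dimension $110n$ over $\mathbb{Z}_3$, while the source contributes $\xi_3(G_1)+\xi_3(G_2)+2n_1+2n_2$ free $\mathbb{Z}_3$-generators and we quotient by at most $2(n_1+n_2)$ relations. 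A dimension count over $\mathbb{F}_3$ — a quotient can only drop the minimal number of generators by at most the number of relations — gives $110n\leq\xi_3(G_1)+\xi_3(G_2)+2(n_1+n_2)$, hence $n_1+n_2\geq\frac{1}{2}\big(110n-\xi_3(G_1)-\xi_3(G_2)\big)$. Combined with the bound $\xi_3(G_1)+\xi_3(G_2)\leq Cn$ from the previous paragraph (with $C<2$), this forces $\theta_1\geq n_1+n_2\geq\frac{1}{2}(110-C)n\geq n$, contradicting the possibility that both $\theta_1$ and $\theta_3$ are below $n$. Thus $\max\{\theta_1,\theta_3\}\geq n$ for every extendable pair, giving $\theta(K_{110n})\geq n$, and $g_{ds}(K_{110n})\geq\theta(K_{110n})\geq n$ then follows from the inequality noted after Definition \ref{definition for theta}.

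The main obstacle I anticipate is making the mod-$3$ dimension count in the $\theta_1$ step fully rigorous, since the relation subgroup $\langle(a^1_j,a^2_j)\rangle$ lives in a \emph{mixed} group $(G_1\oplus\mathbb{Z}^{2n_1})\oplus(G_2\oplus\mathbb{Z}^{2n_2})$ combining torsion and free parts, and I must ensure that tensoring with $\mathbb{Z}_3$ and counting minimal generators behaves correctly — in particular that passing to the quotient cannot reduce the minimal $\mathbb{F}_3$-generating count by more than the number of relations, even in the presence of the $\mathbb{Z}$-summands and $9$-torsion. A secondary technical point is verifying that the surjection $s_i$ onto $\bigoplus_{\xi_3(G_i)}\mathbb{Z}_9$ needed to invoke Proposition \ref{technicalproposition} genuinely exists and is compatible with $\phi_i$ factoring through $G_i$; this requires lifting the mod-$3$ projection and may need $G_i$ to be expressed via its primary decomposition so that the $\mathbb{Z}_9$-part is isolated cleanly. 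Both issues are bookkeeping rather than conceptual, but the numerical constant $110$ must be chosen so that the final arithmetic $\frac{1}{2}(110-C)\geq 1$ survives whatever slack these lemmas introduce.
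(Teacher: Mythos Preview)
Your overall architecture---split into cases so that either $\theta_1$ or $\theta_3$ is forced to be at least $n$, with Proposition \ref{technicalproposition} driving the Casson--Gordon side---is exactly what the paper does. But the specific invariant you propose to track, namely $\xi_3(G_i)=\dim_{\mathbb{F}_3}(G_i\otimes\mathbb{F}_3)$, is too coarse and causes both halves of the argument to fail as written.

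For the $\theta_3$ step: Proposition \ref{technicalproposition} needs a surjection $H_1(\Sigma)\twoheadrightarrow\bigoplus_m\mathbb{Z}_9$ factoring through $G_i$. Writing $G_i\cong\mathbb{Z}_9^{m_i}\oplus\mathbb{Z}_3^{k_i}$, such a surjection exists only for $m\le m_i$, not $m\le\xi_3(G_i)=m_i+k_i$; there is no way to ``lift the mod-$3$ projection'' to a $\mathbb{Z}_9$-valued map on the $\mathbb{Z}_3$ summands. So the signature bound you get is $\frac{10}{9}m_i$, not $\frac{10}{9}\xi_3(G_i)$, and after subtracting $\xi_3(G_i)$ you obtain only $\frac{1}{9}m_i-k_i$, which the $\mathbb{Z}_3$ summands can make arbitrarily negative.

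For the $\theta_1$ step: your mod-$3$ dimension count $110n\le\xi_3(G_1)+\xi_3(G_2)+2(n_1+n_2)$ is correct but too weak, because $\xi_3(G_1\oplus G_2)$ can equal $110n$ (take $G_1\oplus G_2\cong\mathbb{Z}_3^{110n}$), leaving the bound vacuous. The paper instead multiplies by $3$ first: the surjection $3A\twoheadrightarrow 3H_1(\Sigma)\cong\mathbb{Z}_3^{110n}$ (where $A=(G_1\oplus\mathbb{Z}^{2n_1})\oplus(G_2\oplus\mathbb{Z}^{2n_2})$) forces $l+2(n_1+n_2)\ge 110n$, where $l=m_1+m_2$ is the number of $\mathbb{Z}_9$ summands in $G_1\oplus G_2$. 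This is the sharper inequality $2(n_1+n_2)\ge 110n-l$, and it is what makes the dichotomy close: either $110n-l\ge 2n$ (so $\theta_1\ge n$), or $l>108n$, whence some $m_i\ge 54n$ and then $\theta_3\ge\frac{1}{2}(\frac{10}{9}m_i-\xi_3(G_i))\ge\frac{1}{2}(\frac{m_i}{9}-(110n-l))\ge n$, using $\xi_3(G_i)\le m_i+(110n-l)$.

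Two small side remarks: $J$ is ribbon, so $\sigma(K_{110n})=0$ and there is no ``additive signature term'' to absorb; and the obstacle you flag about tensoring a mixed free/torsion group with $\mathbb{Z}_3$ is not the real issue---right-exactness of $\otimes\mathbb{Z}_3$ handles that cleanly. The genuine missing idea is to count $\mathbb{Z}_9$ summands (equivalently, work with $3G$) rather than $\mathbb{F}_3$-rank.
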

 \begin{proof}
Let $(G_1, G_2)$ be an admissible pair for $H_1(\Sigma(K_{110n}))$. Since $G_1\oplus G_2$ is a quotient group of $H_1(\Sigma(K_{110n}))=\oplus_{110n}\mathbb{Z}_9$, we may write $G_1\oplus G_2=\overbrace{\mathbb{Z}_9\oplus\cdots\oplus\mathbb{Z}_9}^{l}\oplus\mathbb{Z}_3\oplus\cdots\oplus\mathbb{Z}_3$ for some $l$. We will prove $\theta(K_{110n})\geq n$ by considering the possible values of $l$ in two cases.
 
First, if $110n-l\geq 2n$, then we must have $\theta_1(H_1(\Sigma),G_1,G_2)\geq n$ in view of (ii)-(2) of Definition \ref{definition1}, since one must employ at least another $110n-l$ generators of order $9$ to get to $H_1(\Sigma)$. 

Second, if $110n-l<2n$, then $l>108n$. Then at least one of $G_i$, say $G_1$, has $l'$ many $\mathbb{Z}_9$ summands with $l'\geq 54n$. By Proposition \ref{technicalproposition} we have a character $\phi_1:H_1(\Sigma(K_{110n}))\rightarrow \mathbb{Z}_9$ that factors through $\iota_1: H_1(\Sigma(K_{110n}))\rightarrow G_1$, such that $\sigma(\Sigma(K_{110n}),\phi_1)\geq \frac{10}{9}l' .$  Also note that $\xi_3(G_1)\leq l'+110n-l$ and $\sigma(K_{110n})=0$. Therefore,
$$
\begin{aligned}
\theta_3(K_{110n},G_1,G_2) \geq& \frac{1}{2}(\frac{10}{9}l'-(l'+110n-l))\\
\geq& \frac{1}{2}(\frac{l'}{9}-2n)\\
\geq& \frac{1}{2}(\frac{54n}{9}-2n)\\
\geq & n.
\end{aligned}
$$

Therefore, $\theta(K)\geq n$ in either case.
\end{proof}
\begin{rmk}
For any given nonnegative integer $m$, one can similarly prove there is a family of knots whose slice genera are all equal to $m$ and double slice genera grow arbitrarily large. In fact, taking the connected sum of $m$ copies of the trefoil knot and $K_n$ produces such examples.
\end{rmk}
However, the author is not able to prove the embedding numbers of $\Sigma(K_n)$ grow arbitrarily large by the $\delta$-invariant defined in Subsection 2.3. The lower bounds for the embedding number in \cite{AGL17} come from lower bounds for the 2-nd betti number of spin 4-manifolds bounded by the given 3-manifold, and cannot be applied here since our 3-manifolds already bound rational homology balls. It seems natural to ask the following question. 
\begin{que}
Can one find a family of rational homology spheres that are boundaries of spin rational homology balls and whose embedding numbers can grow arbitrarily large? 
\end{que}
Moreover, the author wonders if one can find examples so that the $\delta$-invariant can be applied to answer the above question.
\section{Bounds for the superslice genus}
\subsection{A lower bound for the superslice genus}
We prove Theorem \ref{superslice lower bound} in this subsection. As a preparation we begin with two lemmas. 
\begin{lem}\label{lemma, trivial surface complement has cyclic fundamental group}
Let $F$ be a surface properly embedded in $D^4$ whose double is a closed surface that bounds a handlebody in $S^4$. Then $\pi_1(D^4-F)\cong \mathbb{Z}$. 
\end{lem}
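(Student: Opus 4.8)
The plan is to reduce the statement to showing that $\pi_1(D^4-F)$ is abelian, and then to extract that abelianness from the handlebody bounded by the double. Here $F$ is connected with connected boundary $K$, and I write $\hat F=F_1\cup_K F_2$ for the double, sitting in $S^4=D^4_1\cup_{S^3}D^4_2$ with a handlebody $H$ satisfying $\partial H=\hat F$.

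First I would record the homological skeleton. Writing $\nu(F)\cong F\times D^2$ for a tubular neighborhood and applying van Kampen to $D^4=(D^4-F)\cup_{\partial\nu(F)}\nu(F)$, simple-connectivity of $D^4$ forces $\pi_1(D^4-F)$ to be normally generated by a single meridian $\mu$ of $F$. Alexander duality for the closed surface $\hat F\subset S^4$ gives $H_1(S^4-\hat F)\cong H^2(\hat F)\cong\mathbb{Z}$ with $\mu$ mapping to a generator, so $\mu$ has infinite order in $H_1(D^4-F)$; together with the normal generation this yields $H_1(D^4-F)\cong\mathbb{Z}\langle\mu\rangle$. Consequently it suffices to prove that $\pi_1(D^4-F)$ is abelian, since combined with $H_1(D^4-F)\cong\mathbb{Z}$ this gives $\pi_1(D^4-F)\cong\mathbb{Z}$.

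As a warm-up and guide I would first prove the closed analogue $\pi_1(S^4-\hat F)\cong\mathbb{Z}$. Choosing compressing disks $D_1,\dots,D_g$ for $H$ (cocores of its $1$-handles) and pushing them slightly off $\hat F$ into the complement exhibits a full family of ``$b_i$-type'' curves on $\partial\nu(\hat F)\cong\hat F\times S^1$ as nullhomotopic in $S^4-\hat F$, while the complementary $3$-ball $H\setminus\nu(\cup_i D_i)$, pushed off in the same way, kills the remaining ``$a_i$-type'' generators. What survives of $\pi_1(\hat F\times S^1)$ is exactly the $S^1$-meridian $\mu$, so $\pi_1(S^4-\hat F)=\langle\mu\rangle\cong\mathbb{Z}$.

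To transfer this to $D^4-F$ I would put $H$ in general position with respect to the equatorial $S^3$, arrange the compressing disks and a spine of $H$ transverse to $S^3$, and run a van Kampen argument on the decomposition $S^4-\hat F=(D^4_1-F_1)\cup_{S^3-K}(D^4_2-F_2)$, using the pieces $H\cap D^4_1$ to produce the nullhomotopies \emph{inside} $D^4_1-F_1$. The main obstacle is precisely this descent: the inclusion $D^4-F\hookrightarrow S^4-\hat F$ induces on $\pi_1$ nothing more than the abelianization map, since the target is abelian and $\mu$ maps to a generator, so merely knowing $\pi_1(S^4-\hat F)\cong\mathbb{Z}$ gives no new information about the commutators in $D^4-F$. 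The content of the proof must therefore be to position $H$ so that its compressing disks and spine, after cutting along $S^3$, can be used to bound the required disks within $D^4_1-F_1$; controlling the pieces $H\cap D^4_i$, which need not themselves be handlebodies, is where I expect the genuine difficulty to lie.
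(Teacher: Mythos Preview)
Your proposal correctly sets up the reduction: $\pi_1(D^4-F)$ is normally generated by the meridian $\mu$, and $H_1(D^4-F)\cong\mathbb{Z}$, so it suffices to show $\pi_1(D^4-F)$ is abelian. You also correctly note that the inclusion $D^4-F\hookrightarrow S^4-\hat F$ only exhibits $\mathbb{Z}$ as a quotient of $\pi_1(D^4-F)$, which is useless. But then you stop: your plan to cut the handlebody $H$ along the equator and hope that the pieces $H\cap D^4_i$ furnish the needed nullhomotopies is, as you concede, exactly the hard part, and you give no argument that it can be carried out. As written this is not a proof.

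The idea you are missing is that one can produce a map of fundamental groups in the \emph{opposite} direction, from $\pi_1(S^4-\hat F)$ onto $\pi_1(D^4-F)$, by exploiting the symmetry of the double. Since $F_+$ and $F_-$ are literally the same surface $F$ in the same $D^4$, the two inclusion-induced maps $\pi_1(S^3-K)\to\pi_1(D^4-F_\pm)=\pi_1(D^4-F)$ are equal; hence the identity maps $\pi_1(D^4-F_\pm)\to\pi_1(D^4-F)$ form a commuting cone under the van Kampen pushout computing $\pi_1(S^4-\hat F)$. The universal property then yields a homomorphism $\pi_1(S^4-\hat F)\to\pi_1(D^4-F)$ through which the identity on $\pi_1(D^4-F)$ factors, so it is surjective. (Topologically this is just the fold map $S^4=D^4_+\cup_{S^3}D^4_-\to D^4$, which carries $\hat F$ to $F$ and restricts to a retraction $S^4-\hat F\to D^4-F$.) Since $\pi_1(S^4-\hat F)\cong\mathbb{Z}$, this shows $\pi_1(D^4-F)$ is cyclic, and combined with $H_1(D^4-F)\cong\mathbb{Z}$ you are done. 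No positioning of $H$ relative to the equator is needed.
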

\begin{proof}
Write the double of $F$  as $F_+\cup F_-$. Since $F_+\cup F_-$ bounds a handlebody in $S^4$, $\pi_1(S^4-(F_+\cup F_-))\cong \mathbb{Z}$. Applying Van Kampen's theorem, we have the following pushout diagram. Using the universal property we see there is a surjective map $\pi_1(S^4-(F_+\cup F_-))\cong \mathbb{Z}\rightarrow \pi_1(D^4-F)$. Therefore $\pi_1(D^4-F)$ is a cyclic group and must be isomorphic to $\mathbb{Z}$, since $H_1(D^4-F)\cong\mathbb{Z}$ by Alexander duality. 
\begin{center}
\begin{tikzpicture}[%
    >=stealth,
    shorten >=2pt,
    shorten <=2pt,
    auto,
    node distance=1.4cm
  ]
    \node (a) {$\pi_1(S^3-K)$};
    \node (c)  [node distance=1.4 cm, right of=a, above of=a] {$\pi_1(D^4-F_+)$};
    \node (b) [node distance=3.2 cm, right of=a] {$\pi_1(S^4-(F_+\cup F_-))$};
	\node (d) [node distance=1.4 cm, right of=a, below of=a] {$\pi_1(D^4-F_-)$};
	\node (f)	[node distance=4.5cm, right of=b]{$\pi_1(D^4-F)$};
   
     \path[->] (a) edge       (c);        
     \path[->] (a) edge        (d);   
     \path[->] (c) edge        (b); 
     \path[->] (d) edge       (b);   
     \path[->] (c) edge     node (i) {$Id$}   (f);     
     \path[->] (d) edge     node[swap] (ii) {$Id$}   (f);   
     \path[->,dashed] (b) edge     (f);

	 \end{tikzpicture}
\end{center}

\end{proof}
\begin{lem}
Let $F$ be as in the previous lemma, and let $W$ be the two-fold branched cover of $D^4$ along $F$. Then $H_1(W)=0$.
\end{lem}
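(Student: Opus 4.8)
The plan is to compute $H_1(W)$ directly from the fact, just established in the previous lemma, that $\pi_1(D^4 - F) \cong \mathbb{Z}$. The two-fold branched cover $W$ of $D^4$ along $F$ is built from the two-fold cyclic (unbranched) cover of the complement $D^4 - F$ together with a neighborhood of the branch surface glued back in. Since the branching adds a copy of $F$ along which the cover is trivial, the homology $H_1(W)$ is controlled by the abelianization of the two-fold cover of the complement. So the first step is to pass from the knowledge of $\pi_1(D^4-F)$ to the fundamental group (or at least the first homology) of its connected two-fold cover.

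First I would recall that the two-fold branched cover is obtained from the unbranched two-fold cover $\widetilde{X}$ of $X = D^4 - \nu(F)$ corresponding to the index-two subgroup of $\pi_1(X)$ determined by the mod-2 reduction of the abelianization map $\pi_1(X) \to H_1(X) \cong \mathbb{Z}$, and then filling back in along $F$. Since $\pi_1(X) \cong \mathbb{Z}$ by Lemma \ref{lemma, trivial surface complement has cyclic fundamental group}, the relevant index-two subgroup is $2\mathbb{Z} \cong \mathbb{Z}$, which is again infinite cyclic. Hence the covering space $\widetilde{X}$ has $\pi_1(\widetilde{X}) \cong \mathbb{Z}$ as well (the subgroup itself), so $H_1(\widetilde{X}) \cong \mathbb{Z}$, generated by the lift of twice the meridian. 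The second step is to analyze the branched filling: gluing the neighborhood of $F$ back in kills the class of the meridian of $F$ in the cover. In the branched cover the meridian downstairs has a single lift that wraps once, and this is exactly the generator of $H_1(\widetilde{X})$, so capping it off sends the generator to zero, yielding $H_1(W) = 0$.

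I expect the main obstacle to be making the meridian-counting argument precise: I need to verify carefully that the generator of $H_1(\widetilde{X}) \cong \mathbb{Z}$ is represented by a loop bounding a disk (meridian disk of $F$) in $W$, so that it dies in the branched cover. Concretely, the generator of $\pi_1(X) \cong \mathbb{Z}$ is the meridian $\mu$ of $F$, the index-two subgroup is generated by $\mu^2$, and in the branched cover the preimage of a meridian disk of $F$ becomes a single meridian disk whose boundary represents $\mu^2$ upstairs; capping along $F$ glues in this disk, killing the generator. A clean alternative that avoids delicate covering-space bookkeeping is to use the half-lives-half-dies principle together with Proposition \ref{extendability}: since $F$ is one half of a surface $F_1 \cup F_2$ whose branched double cover $W = W_1 \cup_\Sigma W_2$ is $\#_g S^2 \times S^2$, the exact sequence (\ref{longexactsequence3}) shows $H_1(W_i)$ is a quotient of $H_1(\Sigma)$ by a subgroup of full rank, and the hypothesis that the double bounds a handlebody forces the image of $H_1(\Sigma)$ in $H_1(W_i)$ to vanish.

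I would present the meridian argument as the primary route since it is self-contained and uses only the preceding lemma. In summary, the structure is: (1) identify $W$ as the branched cover associated to the mod-2 abelianization character on $\pi_1(D^4 - F) \cong \mathbb{Z}$; (2) compute that the unbranched two-fold cover has $H_1 \cong \mathbb{Z}$, generated by (a lift of) the meridian; (3) observe that branching caps off this generator, giving $H_1(W) = 0$.
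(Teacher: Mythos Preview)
Your primary meridian argument is correct and essentially identical to the paper's proof: the paper likewise notes that $\pi_1(W-\tilde F)\cong\mathbb{Z}$ (as the index-two subgroup of $\pi_1(D^4-F)\cong\mathbb{Z}$), that its generator is the meridian of $\tilde F$, and that gluing $\tilde F$ back annihilates this class, giving $H_1(W)=0$. (Your proposed alternative via the exact sequences of Subsection~2.1 does not by itself force $H_1(W_i)=0$---in that generality $H_1(W_i)$ need not vanish---but this is moot since your main route is correct.)
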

\begin{proof}
Let $\tilde{F}\subset W$ be the lift of $F$. Then by Lemma 3.1, $\pi_1(W-\tilde{F})\cong \mathbb{Z}$.
Therefore $H_1(W-\tilde{F}) \cong \mathbb{Z}$, generated by the homology class of a meridian of $\tilde{F}$. Gluing $\tilde{F}$ back annihilates this and hence $H_1(W)=0$.
\end{proof}

\begin{proof}[Proof of Theorem \ref{superslice lower bound}]
Assume $F$ is a surface achieving the minimal superslice genus of the knot $K$, and $W$ is the two-fold branched cover of $D^4$ along $F$. Furthermore, let $\Sigma=\partial W$ denote the two-fold branched cover of $S^3$ along $K$. Note that $H_2(\Sigma)=0$, and $H_1(W)=0$ by the previous lemma. Then the long exact sequence associated to the pair $(W,\Sigma)$ gives 
$$
0\rightarrow H_2(W)\rightarrow H_2(W,\Sigma)\rightarrow H_1(\Sigma)\rightarrow 0.
$$

Note $H_2(W,\Sigma)\cong H^2(W)$, and $H^2(W)$ is free abelian by the universal coefficient theorem and the fact that $H_1(W)=0$. Since $b_2(W)=2g(F)$ by Proposition \ref{extendability} (i) and $g(F)=g^s(K)$ by our assumption, we have a presentation for $H_1(\Sigma)$:
$$
0\rightarrow \mathbb{Z}^{2g^s(K)}\rightarrow \mathbb{Z}^{2g^s(K)} \rightarrow H_1(\Sigma)\rightarrow 0.
$$

Hence the theorem follows.
\end{proof}

\subsection{An upper bound for the topological superslice genus}
In this subsection we prove Theorem \ref{superslcie upper bound}. We first recall three non-trivial results.

\begin{thm}[Theorem 7 of \cite{MR804721}, Theorem 11.7B of \cite{MR1201584}]\label{freedman disk theorem}
Let $K$ be a knot in $S^3$ such that $\Delta_K(t)=1$, then $K$ bounds a locally flat, topologically embedded disk $D\subset D^4$ such that $\pi_1(D^4-D)\cong \mathbb{Z}$.
\end{thm}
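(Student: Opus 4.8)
The plan is to realize the slice disk by genus-reducing a pushed-in Seifert surface, using Freedman's \emph{disk embedding theorem} (the Casson-handle machinery) as the geometric engine and the hypothesis $\Delta_K(t)=1$ to supply its algebraic input. First I would take a genus-$g$ Seifert surface $\Sigma$ for $K$ and push its interior into the interior of $D^4$, obtaining a properly embedded surface $F$ with $\partial F=K$, $g(F)=g$, and $\pi_1(D^4\setminus F)\cong\mathbb{Z}$ (the complement being computed from Alexander duality together with the standard picture of a pushed-in Seifert surface). The goal is then to compress $F$ down to a disk while keeping the complement's fundamental group infinite cyclic. Fixing a symplectic basis $\{\alpha_i,\beta_i\}_{i=1}^{g}$ of $H_1(\Sigma)$, it suffices to find $g$ disjoint, locally flat, embedded disks in $D^4\setminus F$ bounded by the half-basis $\{\alpha_i\}$, with the framings induced by $F$, since surgering $F$ along such disks turns the genus-$g$ surface into an embedded disk.

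Second, I would produce the immersed surgery disks together with the algebraic data the embedding theorem consumes. After a suitable isotopy the curves $\alpha_i$ become null-homotopic in $D^4\setminus F$ and so bound continuous, hence (by transversality) immersed, disks $\Delta_i$. The condition $\Delta_K(t)=1$ is precisely what forces the relevant Seifert/Blanchfield pairing to vanish, and I would use this vanishing to arrange that the $\Delta_i$ have algebraically cancelling mutual and self-intersections and the correct normal framings. The same input is what yields algebraically transverse (dual) spheres $S_i$ for the $\Delta_i$, built from the complementary half-basis $\{\beta_i\}$ capped off in the complement; these dual spheres are the essential hypothesis of the disk embedding theorem. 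Crucially, the ambient fundamental group is $\mathbb{Z}$, a \emph{good} group in Freedman's sense, so the theorem is available.

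Third, I would invoke the disk embedding theorem: immersed disks $\Delta_i$ in a $4$-manifold with good fundamental group, carrying the right framings, algebraically cancelling intersections, and algebraically dual spheres, can be replaced by disjoint, flat, embedded disks $\hat\Delta_i$ with the same framed boundary. Surgering $F$ along the $\hat\Delta_i$ then produces a locally flat embedded disk $D\subset D^4$ with $\partial D=K$. A final bookkeeping step verifies that the surgery does not enlarge the complement's fundamental group: since $H_1(D^4\setminus D)\cong\mathbb{Z}$ by Alexander duality and the compressions are performed along a Lagrangian dual to the surviving generators, a van Kampen argument shows the meridian still normally generates, forcing $\pi_1(D^4\setminus D)\cong\mathbb{Z}$.

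The main obstacle is the disk embedding theorem itself, the deep analytic-geometric heart of Freedman's program, where infinite towers of Casson handles are shown to contain honest flat embedded disks; I would quote this as a black box rather than reprove it. The second genuine difficulty is verifying its algebraic hypotheses from $\Delta_K(t)=1$: translating the polynomial condition into the vanishing of the intersection and self-intersection numbers, and especially into the existence of the dual spheres $S_i$, is exactly where the homological meaning of the Alexander polynomial must be exploited, and this is the step I expect to require the most care.
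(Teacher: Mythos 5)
The first thing to say is that the paper does not prove this statement at all: it is imported verbatim as Theorem 1.13 of \cite{Fre82} and used purely as a black box in the proofs of Theorem \ref{superslcie upper bound} and Corollary \ref{upper bound of top double slice genus for ribbon knot}. So there is no in-paper argument to compare against; your proposal can only be measured against Freedman's actual proof. Freedman's route is surgery-theoretic rather than the compression argument you outline: $\Delta_K(t)=1$ is equivalent to the zero-surgery $M_K$ having trivial Alexander module, one then builds a compact $4$-manifold $W$ with $\partial W=M_K$, $\pi_1(W)\cong\mathbb{Z}$ generated by the meridian, and the homology of a circle (this is where surgery over the good group $\mathbb{Z}$ enters), and finally reglues a $2$-handle to $W$ to obtain a contractible manifold with boundary $S^3$, hence $D^4$ by the topological Poincar\'e conjecture; the cocore is the slice disk and $\pi_1(D^4-D)\cong\mathbb{Z}$ is automatic because the exterior \emph{is} $W$. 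Your sketch instead follows the geometric compression scheme (in the spirit of Freedman--Quinn, Section 11.7B), which is a legitimate alternative genre of proof, and your opening step ($\pi_1$ of the complement of a pushed-in Seifert surface is $\mathbb{Z}$) is the same classical fact the paper itself uses in Section 3.2.

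However, as written there are two genuine gaps. First, the translation of $\Delta_K(t)=1$ into the hypotheses of the disk embedding theorem --- which you correctly flag as the hard step --- is not merely delicate; it is the entire content, and your sketch does not isolate where $\Delta_K(t)=1$ does more than algebraic sliceness. Vanishing of the Blanchfield pairing gives $\mathbb{Z}[t^{\pm 1}]$-intersection data, but a half-basis $\{\alpha_i\}$ with vanishing Seifert form (your ``algebraically cancelling intersections and correct framings'') exists for \emph{every} algebraically slice knot, and algebraically slice knots need not be topologically slice. The place where $\Delta_K(t)=1$ is genuinely needed is the construction of the algebraically dual spheres $S_i$ and the $\pi_1$-control in the infinite cyclic cover, and your sentence ``built from the complementary half-basis $\{\beta_i\}$ capped off in the complement'' gives no reason such caps are spheres with the required $\mathbb{Z}[t^{\pm 1}]$-intersection properties; this step requires the normal-form lemma for Seifert matrices with trivial Alexander polynomial (after stabilization of the Seifert surface) and a genuine construction. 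Second, your final bookkeeping step is logically invalid: the meridian of \emph{any} slice disk normally generates $\pi_1(D^4-D)$, and together with $H_1(D^4-D)\cong\mathbb{Z}$ this never forces $\pi_1(D^4-D)\cong\mathbb{Z}$ (ribbon disk exteriors are standard counterexamples to that inference). To conclude $\pi_1\cong\mathbb{Z}$ in the compression approach you must use the complement control built into the disk embedding machinery (e.g.\ $\pi_1$-negligibility of the embedded disks), or else abandon compression and run Freedman's exterior-first argument, where $\pi_1\cong\mathbb{Z}$ is part of the construction rather than an afterthought.
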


\begin{thm}[Theorem 6 of \cite{MR804721}, Theorem 11.7A of \cite{MR1201584}]\label{spherical unknotting theorem}
A locally flat embedding $f:S^2\rightarrow S^4$ is unknotted if and only if $\pi_1(S^4-f(S^2))\cong \mathbb{Z}$. 
\end{thm}

\begin{prop}[Proposition 2 of \cite{Fel16}]\label{feller's proposition}
Let $K$ be a knot. Every Seifert surface $F_K$ of $K$ contains a
simple closed curve $J$ separating $F_K$ into two subsurfaces $C_{K,J}$ and $F_J$ such that:
\begin{enumerate}[(i)]
\item The Alexander polynomial of $J$ is trivial.
\item $F_J$ is a Seifert surface for $J$ with
$2g(F_J) = 2g (F_K) -\text{deg}(\Delta_K(t))$.
\end{enumerate}
\end{prop}

We move on to apply these results to prove Theorem \ref{superslcie upper bound}. First, note Theorem \ref{freedman disk theorem} and Theorem \ref{spherical unknotting theorem} together implies
\begin{prop}[Theorem 4.5 of \cite{LM15}, Corollary 3.3 of \cite{Mei15}]\label{proposition, knots with trivial alexander polynomial is trivial}
Knots with trivial Alexander polynomial are topologically superslice.
\end{prop}
\begin{proof}
Doubling a slice disk asserted as in Theorem \ref{freedman disk theorem} produces a 2-knot satisfying the condition of Theorem \ref{spherical unknotting theorem}. 
\end{proof}

We set up some terminology for convenience. Given a superslice knot $J$, a 3-ball (locally flatly) embedded in $S^4$ is called a \emph{superslice ball} for $J$ if its boundary is the double of some superslice disk for $J$. Given a submanifold $N$ (possibly with non-empty boundary) of a manifold $M$, by a \emph{normal bundle} of $N$ we mean a disk bundle $E$ over $N$ together with an embedding of $E$ into $M$ such that $N$ is identified with the $0$-section of $E$. In this subection, the normal bundles we will encounter are always trivial. By abusing notation, we denote a trivial normal bundle $\phi:N \times D^k\rightarrow S^n$ of $N\subset S^n$ by $N\times D^k$ for appropriate $n$ and $k$, with $*\times D^k$ understood as $\phi(\{*\}\times D^k)$. Let $F$ an orientable surface (possibly with boundary) in $S^3$ and $\alpha: [0,1] \rightarrow S^3$ be embedded arc. We use the letter $\alpha$ to denote the map interchangeably with its image $\alpha([0,1])$ by abusing notation. Assume $\alpha \cap F=\{\alpha(0),\alpha(1) \}$ and $\alpha$ approaches both ends from the same side in a normal direction of $F$, we can perform a \emph{stabilization} of $F$ along $\alpha$: Take a normal bundle $\alpha\times D^2\subset S^3$ of $\alpha$ such that $(\alpha\times D^2) \cap F=\{\alpha(0),\alpha(1) \}\times D^2$, then a stabilization of $F$ along $\alpha$ is defined to be the surface $F'=(F-\{\alpha(0),\alpha(1) \}\times D^2 )\cup (\alpha \times \partial D^2)$.

Given a knot $K$, the idea for proving Theorem \ref{superslcie upper bound} is patching together a superslice ball for $J$ and a thickened Seifert surface for $K$, where $J$ is a knot as in Proposition \ref{feller's proposition}. To acheive this, one must understand the cross-section of a superslice ball at the equator $S^3$. The key lemma towards this goal is the following.

\begin{lem}\label{lemma, stabilizing the cross-section}
Let $B_J$ be a superslice ball for $J$. Assume $B_J$ intersects $S^3$ transversally and denote $B_J\cap S^3= S_0\cup S_1\cup \cdots \cup S_n$, where $S_0$ is a Seifert surface for $J$ and $S_1,\ldots, S_n$ are closed orientable surfaces. Let $S_i'$ be a stabilization of $S_i$ along some arc whose interior is disjoint from all the $S_i$'s. Then there is a boundary-fixing isotopy of $B_J$ to $B_J'$ such that $B_J'\cap S^3=S_0\cup \cdots \cup S_i'\cup\cdots \cup S_n \cup \partial(nb(L))$, where $L$ is a link in the complement of $S_0\cup \cdots \cup S_i'\cup\cdots \cup S_n$ and $nb(L)$ is a regular neighborhood of $L$.
\end{lem}

\begin{proof}
Denote the arc along which we stabilize $S_i$ by $\alpha$. We first pick an embedded arc $\beta$ in $B_J$ such that $\beta \cap (S_0\cup\cdots\cup S_n)=\{\alpha(0),\alpha(1)\}$. Roughly, $\beta$ is obtained by pushing the interior of a path in $S_i$ connecting $\alpha(0)$ and $\alpha(1)$ into $B_J\backslash S_i$. For the sake of a clear discussion, we fix a specific choice of $\beta$. First, parametrize a closed neighborhood of $S^3$ by $S^3\times[-1,1]$, where we identify the equator $S^3$ as $S^3\times\{0\}$ and require $S^3\times [-1,1]$ to be thin enough so that $B_J\cap (S^3\times [-1,1])=(S_0\cup S_1\cup \cdots \cup S_n)\times [-1,1]$. Let $\beta': [0,1]\rightarrow S_i$ be an embedded path connecting $\alpha(0)$ and $\alpha(1)$, and let $\phi: [0,1]\rightarrow [0,1]$ be the function $t\mapsto \sqrt{\frac{1}{4}-(t-\frac{1}{2})^2}+\frac{1}{3}$. We then set
\begin{displaymath}
\begin{aligned}
&\beta :\left[ 0,1\right] \rightarrow S_{i}\times \left[ 0,1\right] \\
 &t\mapsto \begin{cases}
 \left( \alpha \left( 0\right) ,t\right),\,\,\,\,\,\,\,\,\,\,\,\,\,\,\,\,\,\,\,\,\,\,\,\,\,\,\,\,\,\,\,\,\,\ \ \ \ \ 0\leq t\leq \frac {1}{3}\\
  \left( \beta '\left( 3t-1\right) ,\phi \left( 3t-1\right) \right),\,\,\,\,\,\,\frac {1}{3}\leq t\leq \frac {2}{3}\\
   \left( \alpha \left( 1\right) ,1-t\right),\,\,\,\,\,\,\,\,\,\,\,\,\,\,\,\,\,\,\,\,\,\,\,\,\,\,\,\,\,\,\,\,\,\frac {2}{3}\leq t\leq 1.
   \end{cases}
\end{aligned}
\end{displaymath} 

The idea for proving this lemma is to find an embedded disk $W$ bounded by $\alpha\cup \beta$ which only intersects $B_J$ at $\beta$, and then push $B_J$ across this disk. A schematic picture is shown in Figure \ref{figure, push beta across W}. 
\begin{figure}[htb!]
\begin{center}
\includegraphics[scale=0.55]{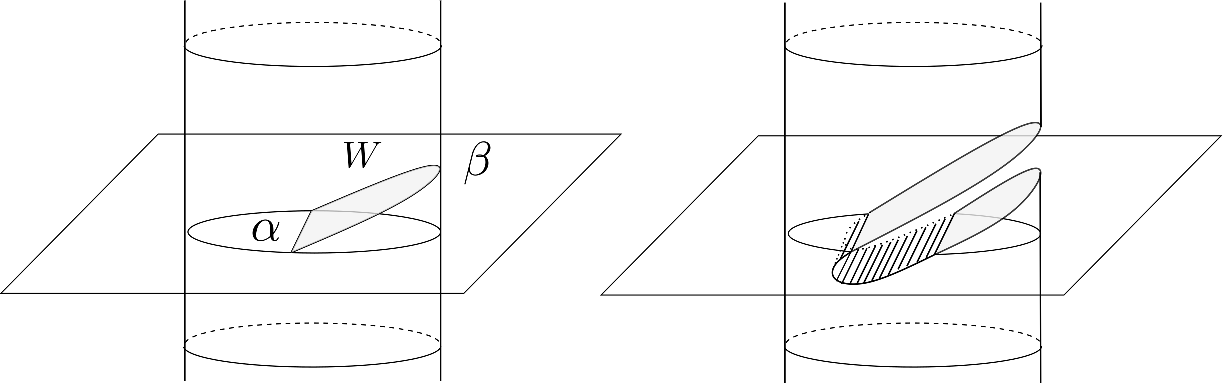}
\caption{A one dimension lower schematic picture for pushing $B_J$ across a disk to realize a stabilization of the cross-section.}\label{figure, push beta across W}
\end{center}
\end{figure}

We set this up more carefully. Let $\beta\times D^2\subset B_J$ be a normal bundle of $\beta$ such that $(\beta\times D^2)\cap (S_0\cup\cdots\cup S_n)=\{\beta(0),\beta(1)\}\times D^2$. Similarly, let $\alpha\times D^2\subset S^3$ be a normal bundle of $\alpha$ such that $\partial\alpha\times D^2$ is identified with $\partial \beta \times D^2$. We want an embedded disk $W$ such  that $W\cap (B_J\cup\alpha)=\beta\cup\alpha$, and $W$ admits a normal bundle $W\times D^2$ such that $\partial W\times D^2=(\alpha\times D^2)\cup (\beta\times D^2)$. 

To find such a $W$, we first construct a collar of $\partial W$ carefully. Let $B_J\times [-\epsilon,\epsilon]$ be a normal bundle of $B_J$ in $S^4$. Note $\alpha \cap (B_J\times[-\epsilon,\epsilon])=\{\alpha(0),\alpha(1)\}\times [0,\epsilon]$, for $(B_J\times[-\epsilon,\epsilon])\cap S^3$ is a normal bundle of the cross-section in $S^3$. Let $(\alpha\times D^2) \times [-t_0,t_0]$ be a normal bundle of $\alpha \times D^2$ in $S^4$ obtained by taking a product with a short interval in the 4-th dimension, where $t_0<1/3$. Then $((\alpha\times D^2) \times [-t_0,t_0])\cap (B_J\times [-\epsilon,\epsilon])=(\partial \alpha \times D^2) \times [0,\epsilon]\times [-t_0,t_0]$. (See Figure \ref{figure,thicken_alpha_and_B} and Figure \ref{figure,collar of boundary of W}.)
\begin{figure}[htb!]
\begin{center}
\includegraphics[scale=0.55]{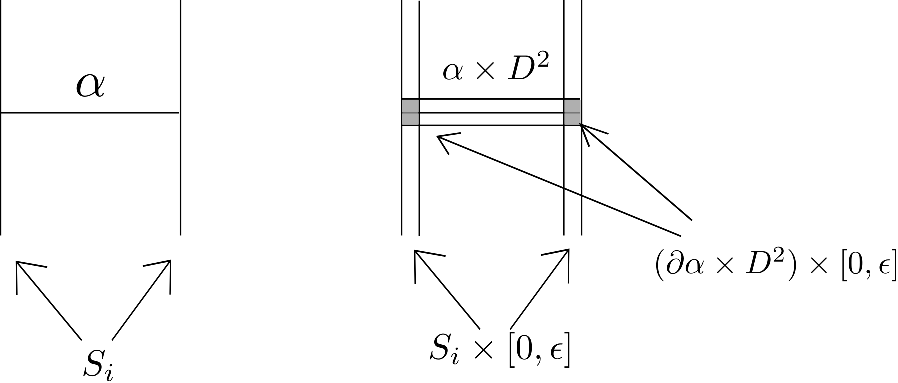}
\caption{A schematic picture for the intersection of $B_J\times[-\epsilon,\epsilon]$ and $(\alpha\times D^2)\times [-t_0,t_0]$ in $S^3$. The entire intersection can be viewed as the product of the intersection in $S^3$ with $[-t_0,t_0]$.}\label{figure,thicken_alpha_and_B}
\end{center}
\end{figure}

Let $\tilde{\alpha}=\alpha-(\{\alpha(0),\alpha(1)\}\times [0,\epsilon])$ and let $A=(\beta\times[0,\epsilon])\cup (\tilde{\alpha}\times[0,t_0])$. Then $A$ is an annulus with one of the boundary components being $\alpha\cup\beta$ (Figure \ref{figure,collar of boundary of W}). Clearly, $A$ admits a normal bundle that restricts to $(\alpha\cup\beta)\times D^2$ as desired. Denote the other component of $\partial A$ by $l$, note $l\subset \partial ((B_J\times [-\epsilon,\epsilon])\cup(\alpha\times D^2 \times [-t_0,t_0]))$. Note $(B_J\times [-\epsilon,\epsilon])\cup(\alpha\times D^2 \times [-t_0,t_0])$ is homeomophic to $S^1\times D^3$ and $l$ is isotopic to $S^1\times \{pt\}\subset S^1\times \partial D^3$. As any embedded $S^1$ is unknotted in $S^2\times D^2$, $l$ bounds a disk $D$ in the closed complement of $(B_J\times [-\epsilon,\epsilon])\cup(\alpha\times D^2 \times [-t_0,t_0])$. The desired disk is then $W=A\cup D$ and has a normal bundle extending the one on $A$ (\cite{MR1201584} Theorem 9.3A). 
\begin{figure}[htb!]
\begin{center}
\includegraphics[scale=0.55]{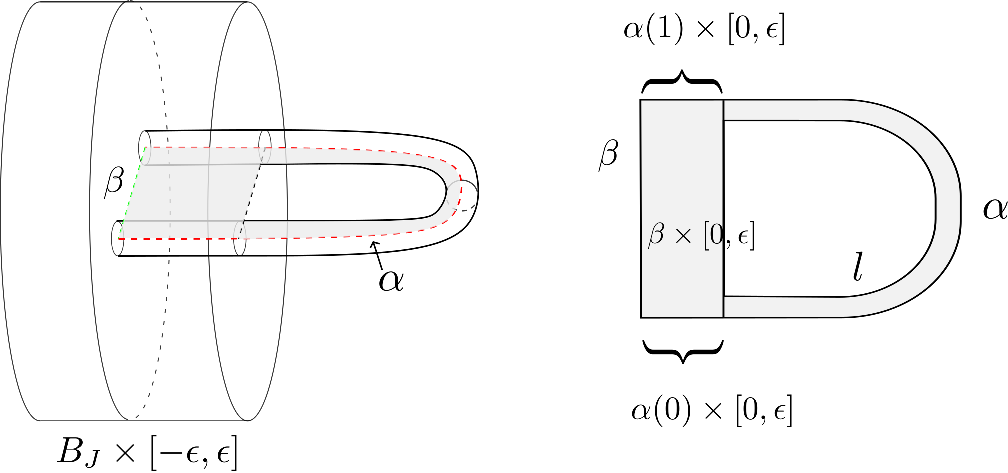}
\caption{Schematic picture of $(B_J\times[-\epsilon,\epsilon])\cup (\alpha\times D^2\times[-t_0,t_0])$ (left) and the annulus $A$ (right).}\label{figure,collar of boundary of W}
\end{center}
\end{figure}

Note by construction $A\cap S^3=\alpha\subset \partial W$. Since $W=A\cup D$, by transversality the interior of $W$ intersects $S^3$ at a collection of circles contained in $D$, which is a link $L\subset S^3$ in the complement of the $S_i's$. Modify $B_J$ by pushing $\beta\times D^2$ across $W\times D^2$. Equivalently, this is same as removing the interior of $\beta\times D^2$ and glue back $\partial (W\times D^2)\backslash (\beta\times D^2)$. Call the resulting 3-ball $\tilde{B}_J$, note $\tilde{B}_J\cap S^3=S_0\cup \cdots \cup S_n \cup (\alpha\times D^2) \cup \partial(nb(L))$. Finally, push $\alpha\times \text{Int}(D^2)$ slightly into the 4-th dimension to yield the desired ball $B_J'$ with a cross-section as stated in the lemma. 
\end{proof}

With Lemma \ref{lemma, stabilizing the cross-section} at hand, we can modify an arbitrary superslice ball to obtain favorable cross-sections. More concoretely, we have the following proposition.
\begin{prop}\label{Proposition, superslice ball cross-section arrangement}
For any knot $K$ there exists some Seifert surface $F_K$ such that
\begin{itemize}
\item[(1)] there exists a knot $J$ contained in $F_K$ satisfying the conclusion of Proposition \ref{feller's proposition}.  
\item[(2)] $J$ has a superslice ball $B_J$ such that $B_J\cap F_K=F_J$.
\end{itemize}
\end{prop}

\begin{proof}

Let $F_K'$ be an arbitrary Seifert surface for $K$. Apply Proposition \ref{feller's proposition} to get $F_K'=C_{J,K}\cup_J F_J'$, where $F_J'$ is a Seifert surface for $J$. According to Proposition \ref{proposition, knots with trivial alexander polynomial is trivial}, $J$ is superslice. Let $B_J'$ be a superslice ball for $J$ such that $B_J'\cap S^3= F_J''\cup S_1\cup \cdots \cup S_n$, where $F_J''$ is some Seifert Surface for $J$ and $S_1,\ldots ,S_n$ are closed orientable surfaces. Let $F_J$ be a common stabilization of $F_J'$ and $F_J''$. Our goal will be achieved by stabilizing both $F_K'$ and $B_J' \cap S^3$ properly. We divide the procedure into three steps.

\textbf{Step 1.} Stabilize $F_K'$ to get $F_K=C_{K,J}\cup F_J$. Note this is possible, for if an arc we use to stabilize $F_J'$ intersects $C_{K,J}$, then we can push the arc off $C_{K,J}$ along a path from the intersection point to the $K$-boundary. See Figure \ref{figure, pushing off intersection}. 
\begin{figure}[htb!]
\begin{center}
\includegraphics[scale=0.5]{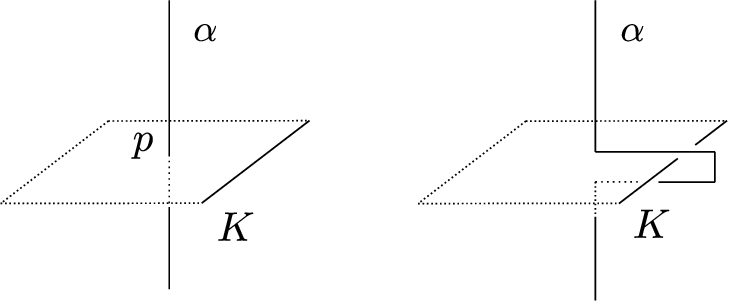}
\caption{Pushing an intersection off $C_{K,J}$.}\label{figure, pushing off intersection}
\end{center}
\end{figure}

\textbf{Step 2.} Isotope $B_J'$ to $B_J''$ so that the Seifert surface appeared in the cross-section $B_J''\cap S^3$ is $F_J$. We explain how to achieve this by assuming $F_J$ can be obtained from $F_J''$ by a single stabilization along an arc $\alpha$. The general case easily follows by repeating the argument. 

We induct on the number of intersection points of $\alpha \cap (S_1\cup\cdots\cup S_n)$. Denote this number by $2k$. Suppose there are no intersection points between $\alpha$ and the $S_i$'s, then Step 2 can be accomplished by applying Lemma \ref{lemma, stabilizing the cross-section}. Assume Step 2 can be accomplished when there are fewer than $2k$ intersection points. Now suppose there are $2k$ intersection points with $k>0$. Then the $\alpha$ arc is divided into subarcs by these points. There must be a subarc $\alpha'$ such that the end points of $\alpha'$ lie on some surface $S_i$ and the interior of $\alpha'$ does not intersect any of the surfaces. To see this, assume the arc $\alpha$ intersects surfaces $S_{m_1},\ldots, S_{m_l}$ for some $l\leq n$. There must be a surface $S_{m_i}$ which bounds a region that does not contain the end points of $\alpha$ and any other surface $S_{m_j}$, $j\neq i$. Note $\alpha$ must intersect such a surface consecutively at two points, and we take $\alpha'$ to be the subarc between two such intersection points. Apply Lemma \ref{lemma, stabilizing the cross-section} to $\alpha'$. Note $\alpha$ does not intersect the newly appeared tori $\partial(nb(L))$ as we may assume $\alpha\cap L=\emptyset$ by a general position argument, and we can arrange $\alpha$ to go through the newly appeared tunnel gained from the stabilization. Therefore, we have two fewer intersection points between $\alpha$ and the new cross-section, and the claim follows from the inductive hypothesis. 
See Figure \ref{figure, stabilizing S to create channels}.
\begin{figure}[htb!]
\begin{center}
\includegraphics[scale=0.55]{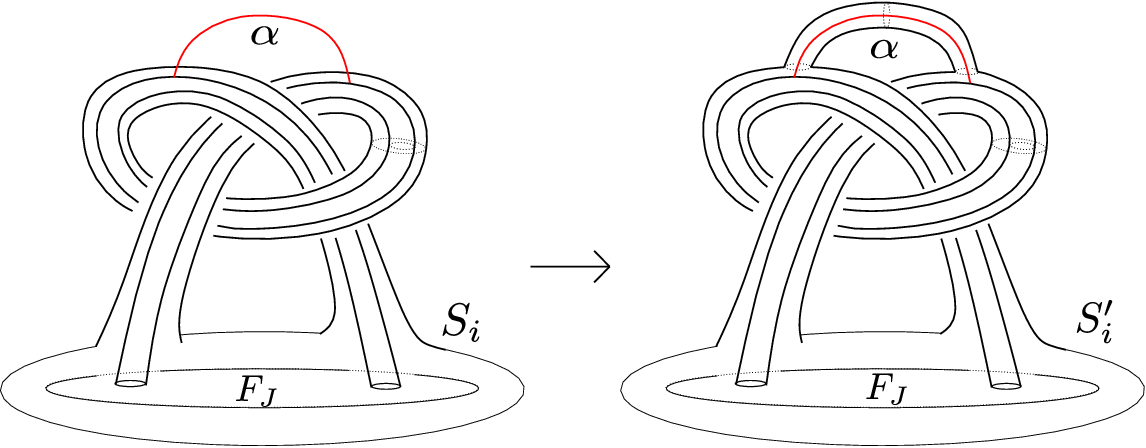}
\caption{Stabilizing $S_i$ to create tunnels for stabilizations of $F_J$.}\label{figure, stabilizing S to create channels}
\end{center}
\end{figure}

\textbf{Step 3.} Isotope $B_J''$ to $B_J$ so that the closed components in the cross-section do not intersect $C_{K,J}$. To see this, note to build $F_K$ from $F_J$ one needs to add a number of bands, and $C_{K,J}$ is isotopic to the union of a collar of $J$ and the bands. It is clear that if the core of a band does not intersect the $S_i$'s, then the whole band does not intersect the $S_i$'s. If not, suppose the core of the band does intersect the surfaces, then a stabilization arrangement as in Step 2 can be employed to remove the intersections. Therefore, after possibly stabilizing the $S_i$'s and introducing more tori, we may assume these closed surfaces do not intersect $C_{K,J}$. 
\end{proof}

Theorem \ref{superslcie upper bound} now follows readily from Proposition \ref{Proposition, superslice ball cross-section arrangement}.
\begin{proof}[Proof of Theorem \ref{superslcie upper bound}]
Given a knot $K$, let $F_K$ and $B_J$ be a Seifert surface for $K$ and a superslice ball for $J$ respectively as in Proposition \ref{Proposition, superslice ball cross-section arrangement}. Thicken $F_K$ slightly to $F_K\times [-\epsilon,\epsilon]$ using the 4-th dimension. By transversality we may assume $B_J \cap (F_K\times [-\epsilon,\epsilon])=F_J\times[-\epsilon,\epsilon]$ provided $\epsilon$ is small enough. Now construct a 3-dimensional handlebody $H=(F_K\times [-\epsilon,\epsilon])\cup B_J$. It is easier to see $H$ is a handlebody by viewing $H=(C_{K,J}\times [-\epsilon,\epsilon])\cup_{J\times[-\epsilon,\epsilon]}B_J$. Denote by $D_J$ the superslice disk that doubles to $\partial B_J$. Then $\partial(H)$ is the double of a surface isotopic to $C_{K,J}\cup D_J$ relative to $K$ in the 4-ball. Finally, note $2g(C_{K,J}\cup D_J)=\text{deg} (\Delta_K(t))$.
\end{proof}

As we mentioned in the introduction, this theorem has an immediate corollary which says that if a knot $K$ has $\deg(\Delta_K(t))=2$, then $g^s_{top}(K)=1$.

\begin{proof}[Proof of Corollary \ref{superslice genus for knots with alex of deg 1}]
It is understood $g^s_{top}(K)=0$ if and only if $\Delta_K(t)=1$ (\cite{LM15} \cite{GS75}). Therefore $\deg(\Delta_K(t))=2$ implies $g^s_{top}(K)>0$. In view of Theorem \ref{superslcie upper bound}, $g^s_{top}(K)\leq 1$.
Hence we have $g^s_{top}(K)=1$.
\end{proof}

\appendix
\section{}
In this appendix we prove Proposition \ref{technicalproposition}. We begin by computing the Casson-Gordon invariants of $L(9,4)$. The author found it convenient to use the strategy in \cite{Gil81} to describe the characters and compute the invariants.

Note that $L(9,4)$ admits a surgery diagram as shown in Figure \ref{figure, surgery diagram for L(9,4)}.
\begin{figure}[!ht]
\center
\includegraphics[scale=0.25]{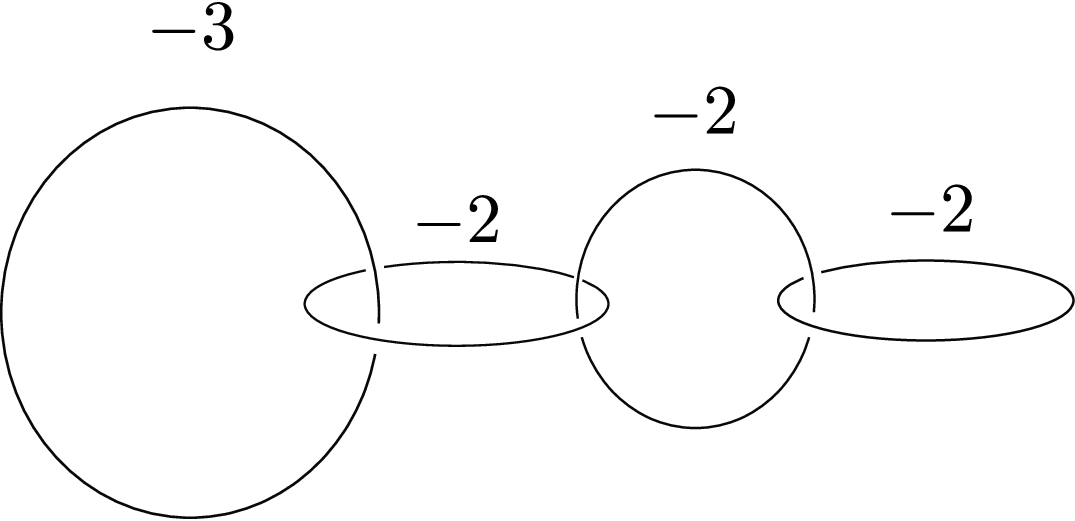}
\caption{A surgery diagram for $L(9,4)$.}
 \label{figure, surgery diagram for L(9,4)}
\end{figure}

We fix an isomorphism $H_1(L(9,4))\cong \mathbb{Z}_9$ by taking the homology class generated by the meridian  (with an arbitrarily chosen orientation) of the $-3$-framed unknot to be $1\in \mathbb{Z}_9$. Let $\chi_a: H_1(L(9,4)) \rightarrow \mathbb{Z}_9$ be the character sending $1$ to $a$. 
 
Applying a formula in \cite{Gil81} (on page 370) one gets $\sigma(L(9,4),\chi_a)$ as shown in the table:\\

\begin{center}
{\renewcommand{\arraystretch}{1.2}
\begin{tabular}{|c|c|c|c|c|c|c|c|c|c|}
\hline
a & 0 & 1 & 2 & 3 & 4 & 5 & 6 & 7 & 8 \\
\hline
$\sigma(L(9,4),\chi_a)$ & 0 & $\frac{5}{9}$ & $\frac{11}{9}$  & 1 & $-\frac{1}{9}$ & $-\frac{1}{9}$ & 1  & $\frac{11}{9}$ & $\frac{5}{9}$  \\
\hline
\end{tabular}
}
\end{center}

We need the following lemma before proving Proposition \ref{technicalproposition}.
\begin{lem}\label{basislemma}
Let $m$, $n$ be nonnegative integers and let $s: H_1(\#_nL(9,4))\rightarrow \oplus_m \mathbb{Z}_9$ be a surjective homomorphism. Identify $H_1(\#_nL(9,4))\cong \oplus_n\mathbb{Z}_9$ using the isomorphism described above.  Then one can choose a basis for $\oplus_m\mathbb{Z}_9$ and reorder the basis of $\oplus_n\mathbb{Z}_9$ if necessary, so that in terms of these two bases, $s$ can be represented by a matrix of the form
$$\begin{bmatrix}
  1 & \cdots & 0 & a^{m+1}_1 & \cdots & a^n_1  \\
    \vdots  & \ddots & \vdots & \vdots & \ddots &\vdots\\
       0    & \cdots  & 1 & a^{m+1}_m & \cdots & a^n_m
\end{bmatrix}_{\textstyle \raisebox{2pt}{.}}$$
Here the entries of the matrix are counted mod $9$.
\end{lem}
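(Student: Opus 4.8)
The plan is to forget the topological provenance of the groups and treat $s$ purely as a surjective homomorphism of finitely generated free modules over the local ring $\mathbb{Z}_9$, exploiting that $\mathbb{Z}_9$ has a unique maximal ideal $(3)$. The operations we are allowed are exactly two: post-composing $s$ with an automorphism of the target $\oplus_m\mathbb{Z}_9$ (the freedom to choose a new target basis, i.e.\ left multiplication of the matrix by an element of $GL_m(\mathbb{Z}_9)$), and permuting the canonical basis vectors of the source $\oplus_n\mathbb{Z}_9$ (right multiplication by a permutation matrix). Using only these, I want to bring the matrix of $s$ into the block form $[\,I_m \mid A\,]$.

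First I would reduce modulo $3$. Let $\bar{s}\colon \oplus_n\mathbb{Z}_3 \to \oplus_m\mathbb{Z}_3$ be the induced map. Since $s$ is surjective and reduction mod $3$ is right exact, $\bar{s}$ is again surjective; but $\mathbb{Z}_3$ is a field, so $\bar{s}$ is an epimorphism of vector spaces and hence has full row rank $m$. Consequently some collection of $m$ columns of $\bar{s}$ is linearly independent over $\mathbb{Z}_3$, i.e.\ forms an invertible $m\times m$ matrix.

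Next I would reorder the canonical basis of $\oplus_n\mathbb{Z}_9$ (a permutation of columns) so that these $m$ distinguished columns come first, and let $B$ denote the resulting leading $m\times m$ block of the matrix of $s$ over $\mathbb{Z}_9$. Its reduction $\bar{B}$ modulo $3$ is invertible, so $\det(B)$ reduces to a nonzero element of $\mathbb{Z}_3$; hence $\det(B)$ is a unit in $\mathbb{Z}_9$, the non-units of $\mathbb{Z}_9$ being exactly the multiples of $3$. Therefore $B \in GL_m(\mathbb{Z}_9)$, and left-multiplying the whole matrix by $B^{-1}$—a legitimate change of basis of the target—turns the leading block into $I_m$ and yields the desired form $[\,I_m \mid A\,]$, with $A = B^{-1}A'$ where $A'$ is the trailing $m\times(n-m)$ block.

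The one step that requires genuine care, and which I regard as the crux, is the passage from invertibility of $\bar{B}$ over the field $\mathbb{Z}_3$ to invertibility of $B$ over the ring $\mathbb{Z}_9$: this is precisely where the local structure of $\mathbb{Z}_9$ (equivalently, Nakayama's lemma) is essential, and it is exactly the property that a general ring $\mathbb{Z}_d$ with $d$ not a prime power would fail to have—so this is the place where the choice of $L(9,4)$, with $H_1 \cong \mathbb{Z}_{3^2}$, pays off. The degenerate cases $m=0$ (empty matrix) and $m=n$ (no trailing block) are immediate.
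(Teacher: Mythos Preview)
Your proof is correct and, while it rests on the same underlying fact as the paper's---that the non-units of $\mathbb{Z}_9$ are exactly the multiples of $3$---the execution is genuinely different. The paper proceeds by iterative row reduction: for each row in turn it locates a unit entry (using surjectivity onto that $\mathbb{Z}_9$ summand), scales the corresponding target basis vector to turn that entry into $1$, permutes the column into pivot position, and then clears the rest of the column by a change of target basis; repeating $m$ times produces the identity block. Your argument instead passes to $\mathbb{Z}_3$ in one stroke, uses linear algebra over a field to locate $m$ columns forming an invertible block, and then lifts invertibility back to $\mathbb{Z}_9$ via the determinant (equivalently, Nakayama). Your route is more conceptual and makes the role of the local ring structure explicit, which also clarifies why the argument would fail over $\mathbb{Z}_d$ for composite non-prime-power $d$; the paper's route is more elementary and avoids invoking determinants or Nakayama, at the cost of being slightly longer to write out. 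Either way the allowed operations are exactly the ones you identify: arbitrary left multiplication by $GL_m(\mathbb{Z}_9)$ and right multiplication by permutation matrices only.
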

\begin{proof}
Start with an arbitrary basis for $\oplus_m\mathbb{Z}_9$. Write $s$ as a matrix
$$\begin{bmatrix}
  a^1_1 & \cdots & a^m_1 & a^{m+1}_1 & \cdots & a^n_1  \\
    \vdots  & \ddots & \vdots & \vdots & \ddots &\vdots\\
     a^1_m    & \cdots  & a^m_m & a^{m+1}_m & \cdots & a^n_m
\end{bmatrix}_{\textstyle \raisebox{2pt}{.}}$$
Since $s$ is surjective, then the first row represents a surjective map $\oplus_n\mathbb{Z}_9$ to $\mathbb{Z}_9$. Therefore, there must exists $a^i_1$ for some $i$ such that $3\nmid a^i_1$. So $a^i_1$ is a generator in $\mathbb{Z}_9$, and we may alter the first basis element of $\oplus_m\mathbb{Z}_9$ by a proper multiplication, so that under the new basis, $a^i_1$ becomes $1$. Reorder the basis for $H_1(\#_nL(9,4))$ if necessary to achieve $a^1_1=1$. 

Then under the new bases, $s$ has the following matrix representation (by abusing notation we still denote the undetermined values by $a^i_j$):
$$\begin{bmatrix}
  1 & \cdots & a^m_1 & a^{m+1}_1 & \cdots & a^n_1  \\
    \vdots  & \ddots & \vdots & \vdots & \ddots &\vdots\\
     a^1_m    & \cdots  & a^m_m & a^{m+1}_m & \cdots & a^n_m
\end{bmatrix}_{\textstyle \raisebox{2pt}{.}}$$
Write for now the basis for $\oplus_m\mathbb{Z}_9$ as $e_1,...,e_m$. Make a change of basis: $e_1'=e_1+\Sigma_{j=2}^m a^1_j e_j$, $e_i'=e_i$ for $i=2,3,...,m$.  Then under the new basis $\{e_i'\}$,  (abusing notation again) $s$ has the following matrix representation:
$$\begin{bmatrix}
  1 & a^2_1&\cdots & a^m_1 & a^{m+1}_1 & \cdots & a^n_1  \\
   0 & a^2_2  & \cdots  & a^m_2 & a^{m+1}_m & \cdots & a^n_m\\
    \vdots &\vdots & \ddots & \vdots & \vdots & \ddots &\vdots\\
     0 &a^2_m   & \cdots  & a^m_m & a^{m+1}_m & \cdots & a^n_m
\end{bmatrix}_{\textstyle \raisebox{2pt}{.}}$$

Repeating this process for each row leads to a matrix of the desired form.
\end{proof}

We are ready to prove Proposition \ref{technicalproposition}.

\begin{proof}[Proof of Proposition \ref{technicalproposition}]
By Lemma \ref{basislemma}, we may assume $s$ is of the form 
$$\begin{bmatrix}
  1 & \cdots & 0 & a^{m+1}_1 & \cdots & a^n_1  \\
    \vdots  & \ddots & \vdots & \vdots & \ddots &\vdots\\
       0    & \cdots  & 1 & a^{m+1}_m & \cdots & a^n_m
\end{bmatrix}_{\textstyle \raisebox{2pt}{.}}$$
We want to choose a map $j:\oplus_m\mathbb{Z}_9\rightarrow \mathbb{Z}_9$  so that the corresponding Casson-Gordon invariant is big enough. Write $j$ as 
$$\begin{bmatrix}
  j_1 & j_2 &  \dots & j_m
\end{bmatrix}_{\textstyle \raisebox{2pt}{.}}$$

Using the additivity of Casson-Gordon invariants (e.g. \cite{Jia81}), we have
\begin{displaymath}
\begin{aligned}
\sigma(\#_nL(9,4),&j\circ s)=\sigma(L(9,4), \chi_{j_1})+...+\sigma(L(9,4),\chi_{j_m})\\
&+\sigma(L(9,4), \chi_{j_1a^{m+1}_1+...+j_ma^{m+1}_m})+...+\sigma(L(9,4), \chi_{j_1a^{n}_1+...+j_m a^{n}_m}).
\end{aligned}
\end{displaymath}

We encode the last $n-m$ terms of the above equation into a vector $$H(j_1,j_2,...,j_m)=[\sigma(L(9,4), \chi_{j_1a^{m+1}_1+...+j_ma^{m+1}_m}),...,\sigma(L(9,4), \chi_{j_1a^{n}_1+...+j_ma^{n}_m})].$$

Consider $H(2,2,...,2)$. If $H(2,2,...,2)$ has less than $m$ entries of value $-\frac{1}{9}$, let $j=[2,2,...,2]$.  Since all other values in the table are nonnegative, we have 
\begin{displaymath}
\begin{aligned}
\sigma(\#_nL(9,4),j\circ s)=&\sigma(L(9,4), \chi_2)+...+\sigma(L(9,4),\chi_2)\\
+\sigma(L(9,4), &\chi_{2a^{m+1}_1+...+2a^{m+1}_m})+...+\sigma(L(9,4), \chi_{2a^{n}_1+...+2 a^{n}_m})\\
&\geq \frac{11}{9}m-\frac{1}{9}m\\
&=\frac{10}{9}m.
\end{aligned}
\end{displaymath}

If $H(2,2,...,2)$ has at least $m$ entries of value $-\frac{1}{9}$, then choose $j=[6,6,...,6]$. Using the table one sees the entries of value $-\frac{1}{9}$ in $H(2,2,...,2)$ all become $1$ in $H(6,6...,6)$. Also note that there will not be any negative values in $H(6,6,...,6)$ since $3| (6a^{i}_1+...+6 a^{i}_m)$ for any $i$ and such character always corresponds to value $1$ or $0$. Therefore, in this case
\begin{displaymath}
\begin{aligned}
\sigma(\#_nL(9,4),j\circ s)=&\sigma(L(9,4), \chi_6)+...+\sigma(L(9,4),\chi_6)\\
+\sigma(L(9,4), &\chi_{6a^{m+1}_1+...+6a^{m+1}_m})+...+\sigma(L(9,4), \chi_{6a^{n}_1+...+6a^{n}_m})\\
&\geq m+m\\
&\geq \frac{10}{9}m.
\end{aligned}
\end{displaymath}
\end{proof}

\bibliographystyle{abbrv}%\footnotesize
\bibliography{doubref}

\begin{thebibliography}{10}

\bibitem{AGL17}
P.~Aceto, M.~Golla, and K.~Larson.
\newblock Embedding 3-manifolds in spin 4-manifolds.
\newblock {\em Journal of Topology}, 10(2):301--323, 2017.

\bibitem{CG86}
A.~J. Casson and C.~M. Gordon.
\newblock Cobordism of classical knots.
\newblock In {\em \`A la recherche de la topologie perdue}, volume~62 of {\em
  Progr. Math.}, pages 181--199. Birkh\"auser Boston, Boston, MA, 1986.
\newblock With an appendix by P. M. Gilmer.

\bibitem{Fel16}
P.~Feller.
\newblock The degree of the {A}lexander polynomial is an upper bound for the
  topological slice genus.
\newblock {\em Geom. Topol.}, 20(3):1763--1771, 2016.

\bibitem{FL18}
P.~Feller and L.~Lewark.
\newblock On classical upper bounds for slice genera.
\newblock {\em Selecta Math. (N.S.)}, 24(5):4885--4916, 2018.

\bibitem{FL19}
P.~Feller and L.~Lewark.
\newblock Balanced algebraic unknotting, linking forms, and surfaces in
  three-and four-space.
\newblock {\em arXiv preprint arXiv:1905.08305}, 2019.

\bibitem{Fox62}
R.~H. Fox.
\newblock Some problems in knot theory.
\newblock In {\em Topology of 3-manifolds and related topics ({P}roc. {T}he
  {U}niv. of {G}eorgia {I}nstitute, 1961)}, pages 168--176. Prentice-Hall,
  Englewood Cliffs, N.J., 1962.

\bibitem{MR804721}
M.~H. Freedman.
\newblock The disk theorem for four-dimensional manifolds.
\newblock In {\em Proceedings of the {I}nternational {C}ongress of
  {M}athematicians, {V}ol. 1, 2 ({W}arsaw, 1983)}, pages 647--663. PWN, Warsaw,
  1984.

\bibitem{MR1201584}
M.~H. Freedman and F.~Quinn.
\newblock {\em Topology of 4-manifolds}, volume~39 of {\em Princeton
  Mathematical Series}.
\newblock Princeton University Press, Princeton, NJ, 1990.

\bibitem{Ste04}
S.~Friedl.
\newblock Eta invariants as sliceness obstructions and their relation to
  {C}asson-{G}ordon invariants.
\newblock {\em Algebr. Geom. Topol.}, 4:893--934, 2004.

\bibitem{Gil81}
P.~M. Gilmer.
\newblock Configurations of surfaces in {$4$}-manifolds.
\newblock {\em Trans. Amer. Math. Soc.}, 264(2):353--380, 1981.

\bibitem{GL83}
P.~M. Gilmer and C.~Livingston.
\newblock On embedding {$3$}-manifolds in {$4$}-space.
\newblock {\em Topology}, 22(3):241--252, 1983.

\bibitem{GS75}
C.~M. Gordon and D.~W. Sumners.
\newblock Knotted ball pairs whose product with an interval is unknotted.
\newblock {\em Math. Ann.}, 217(1):47--52, 1975.

\bibitem{Jia81}
B.~J. Jiang.
\newblock A simple proof that the concordance group of algebraically slice
  knots is infinitely generated.
\newblock {\em Proc. Amer. Math. Soc.}, 83(1):189--192, 1981.

\bibitem{KT76}
L.~H. Kauffman and L.~R. Taylor.
\newblock Signature of links.
\newblock {\em Trans. Amer. Math. Soc.}, 216:351--365, 1976.

\bibitem{Kim06}
T.~Kim.
\newblock New obstructions to doubly slicing knots.
\newblock {\em Topology}, 45(3):543--566, 2006.

\bibitem{LM15}
C.~Livingston and J.~Meier.
\newblock Doubly slice knots with low crossing number.
\newblock {\em New York J. Math.}, 21:1007--1026, 2015.

\bibitem{Mei15}
J.~Meier.
\newblock Distinguishing topologically and smoothly doubly slice knots.
\newblock {\em J. Topol.}, 8(2):315--351, 2015.

\bibitem{Ors17}
P.~Orson.
\newblock Double {$L$}-groups and doubly slice knots.
\newblock {\em Algebr. Geom. Topol.}, 17(1):273--329, 2017.

\bibitem{Rub83}
D.~Ruberman.
\newblock Doubly slice knots and the {C}asson-{G}ordon invariants.
\newblock {\em Trans. Amer. Math. Soc.}, 279(2):569--588, 1983.

\bibitem{Rub16}
D.~Ruberman.
\newblock On smoothly superslice knots.
\newblock {\em New York J. Math.}, 22:711--714, 2016.

\bibitem{Sum71}
D.~W. Sumners.
\newblock Invertible knot cobordisms.
\newblock {\em Comment. Math. Helv.}, 46:240--256, 1971.

\end{thebibliography}
\end{document}